\numberwithin{equation}{section}
\theoremstyle{definition}
\newtheorem{definition}{Definition}[section]
\newtheorem{remark}[definition]{Remark}
\theoremstyle{plain}
\newtheorem{theorem}[definition]{Theorem}
\newtheorem{lemma}[definition]{Lemma}
\newtheorem{result}[definition]{Result}
\newtheorem{obs}[definition]{Observation}
\newcommand{\eps}{\varepsilon}
\newcommand{\zt}{\zeta}
\newcommand{\zbar}{\overline{z}}
\newcommand{\lbar}{\overline{\lambda}}
\newcommand{\albar}{\overline{\alpha}}
\newcommand{\al}{\alpha}
\newcommand{\lm}{\lambda}
\newcommand{\tht}{\theta}
\newcommand \del[1]{\delta_{#1}}
\newcommand{\dl}{\delta}
\newcommand\ba[1]{\overline{#1}}
\newcommand\hull[1]{\widehat{#1}}
\newcommand\wtil[1]{\widetilde{#1}}
\newcommand{\id}{\mathbb{I}}
\newcommand{\bt}{\beta}
\newcommand{\sm}{\sigma}
\newcommand{\om}{\omega}
\newcommand{\bdy}{\partial}
\newcommand{\smoo}{\mathcal{C}}
\newcommand{\bcdot}{\boldsymbol{\cdot}}
\newcommand{\rl}{\Re\mathfrak{e}}
\newcommand{\imag}{\Im\mathfrak{m}}
\newcommand{\impl}{\Longrightarrow}
\newcommand{\mapp}{\longrightarrow}
\newcommand{\CC}{\mathbb{C}^2}
\newcommand{\cplx}{\mathbb{C}}
\newcommand{\RR}{\mathbb{R}^2}
\newcommand{\rea}{\mathbb{R}}
\newcommand{\upl}{M_{1} \cup M_{2}}
\newcommand{\usf}{S_1 \cup S_2}
\newcommand\utgt[2]{T_0{#1}\cup T_0{#2}}
\begin{document}

\title[Local polynomial convexity of union of surfaces]{Local polynomial convexity of the union
of two \\ totally-real surfaces at their intersection}
\author{Sushil Gorai}
\address{Department of Mathematics, Indian Institute of Science, Bangalore -- 560 012}
\email{sushil@math.iisc.ernet.in}
\thanks{This work is supported by CSIR-UGC fellowship 09/079(2063) and by the UGC under DSA-SAP,
Phase IV}
\keywords{Polynomial convexity; totally real; union of surfaces}
\subjclass[2000]{Primary: 32E20, 46J10}

\begin{abstract}
We consider the following question: Let $S_1$ and $S_2$ be two smooth, totally-real
surfaces in $\CC$ that contain the origin. If the union of their tangent planes is
locally polynomially convex at the origin, then is $S_1 \cup S_2$ locally polynomially convex
at the origin? If $T_0S_1 \cap T_0S_2=\{0\}$, then it is a folk result that the answer is
{\em yes}. We discuss an obstruction to the presumed proof, and provide a different approach. 
When $dim_\rea (T_0S_1 \cap T_0S_2)=1$, we present a geometric condition under which no consistent
answer to the above question exists. We then discuss conditions under which we can expect local polynomial convexity.

\end{abstract}
\maketitle

\section{Introduction and Statement of Results}\label{S:intro}

The aim of this paper is to provide an answer to the following question:
\smallskip

\begin{itemize}
\item[$(*)$]{\em Let $S_1$ and $S_2$ be two smooth, totally-real surfaces in $\CC$ that contain the
origin. If the union of their tangent planes is locally polynomially convex at the origin, then is
$S_1 \cup S_2$ locally polynomially convex at the origin? }
\end{itemize}
\smallskip

Our interest is to provide a {\bf complete} analysis of the situation. We were motivated by the 
following circumstances --- which will
explain our emphasis on the word ``complete'' ---  to discuss the question $(*)$.
\begin{itemize}
\item[1)] Let $S_1$ and $S_2$ be as above. When $T_0S_1 \cap T_0S_2=\{0\}$, the problem is 
no doubt familiar to the experts. In this case, the answer to $(*)$ is expected to be in the
affirmative. The proof, it is asserted, follows from a slight modification of an argument given
by Forstneri{\v{c}} and Stout in \cite{FS}. While this will work {\em for most pairs $(S_1,S_2)$ 
in $\CC$} (in a sense that will be explained below) it is not clear if such an approach will 
work universally. {\em The reader is urged to look at the discussion that immediately 
follows this list.}
\smallskip

\item[2)] It turns out that when $T_0S_1$ and $T_0S_2$ contain a line, then $\utgt{S_1}{S_2}$ is always
locally polynomially convex at the origin. There are some partial answers to $(*)$ when 
$dim_\rea(T_0S_1 \cap T_0S_2)=1$; see, for instance, \cite{D}. However, many of the results that
we are aware of require $S_1$ and $S_2$ to be {\em real-analytic} surfaces (and one of these
results contains an error; see Remark \ref{Rem:dieu}). In contrast, we wish to answer $(*)$ when $S_1$ and $S_2$ are
merely $\smoo^k$-smooth, $k\geq 2$.    
\smallskip

\item[3)] It turns out that, under a certain natural geometric condition, there is no consistent answer 
to $(*)$ when $dim_\rea(T_0S_1 \cap T_0S_2)=1$. We would like to demonstrate rigorously what this means, 
and also to give some conditions under which $\usf$ is locally polynomially convex at the origin.
\end{itemize}
\smallskip

Let us first consider $(*)$ in the case when $T_0S_1 \cap T_0S_2=\{0\}$. It has been asserted that the
proof of the fact that the answer to $(*)$ is, ``Yes,'' is implicit in \cite{FS}. Such a proof would go 
as follows:
\begin{itemize}
\item {\em Step 1.} Show that there is an invertible $\cplx$-linear transformation that transforms
$\utgt{S_1}{S_2}$ to $M_1 \cup M_2$, where $M_1$ and $M_2$ are totally-real planes of the form
\[
(**) \quad
	\begin{cases}
		M_1 \ : \ w = \zbar & {} \\
		M_2 \ : \ w = r\zbar + \varrho z, & 
		r\neq 0, \ (r,\varrho)\in \RR\setminus\{(1,0)\} \qquad\qquad\qquad
		\end{cases}
\]
\item {\em Step 2.} Use the fact that $\utgt{S_1}{S_2}$ is locally polynomially convex at $0$ 
and apply Kallin's Lemma in a similar manner as in \cite{FS} to infer that $S_1 \cup S_2$ is
locally polynomially convex at $0$.
\end{itemize}
The reason we require $M_1$ and $M_2$ to have the form $(**)$ is because {\em there seems to be no
simple way to deduce the desired result via Kallin's Lemma unless $r$ and $\varrho$ in $(**)$ are
real}. While the transformation described in Step~1 is possible for most pairs of transverse totally 
real planes (whose union is locally polynomially convex at the origin) representing $(T_0S_1,T_0S_2)$,  
we must also contend with the following:
\smallskip 

\begin{obs}\label{O:counterEx} There is at least one one-parameter family of linear
transformations $\{S_p:p\in \rea\setminus\{0\}\}$ of $\CC$ such that
\begin{align}
 (S_p+i\id)(\RR) \ \text{is totally real} \; &\forall p\in \rea\setminus\{0\}, \notag \\
 (S_p+i\id)(\RR)\cap \RR = \{0\} \; &\forall p\in \rea\setminus\{0\}, \notag \\
 (S_p+i\id)(\RR)\cup \RR \ \text{is locally polynomially convex at $0$} \; 
 &\forall p\in \rea\setminus\{0\}, \notag
\end{align}
but for each $p\in\rea\setminus\{0\}$, there exists no invertible $\cplx$-linear transformation 
of $\CC$ that can map $\RR \cup (S_p+i\id)(\RR)$ to a union $M_1 \cup M_2$ with 
$(M_1,M_2)$ having the form $(**)$.
\end{obs}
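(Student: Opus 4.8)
The plan is to extract a $\cplx$-linear invariant of an unordered pair of transverse totally-real planes, show that membership in the family $(**)$ forces this invariant to be real, and then produce planes $(S_p+i\id)(\RR)$ whose invariant is genuinely non-real while their union with $\RR$ stays locally polynomially convex. First I would attach to a pair $(P,Q)$ of totally-real planes with $P\cap Q=\{0\}$ the antilinear involutions $\sigma_P,\sigma_Q$ fixing $P$ and $Q$, and set $T_{(P,Q)}:=\sigma_Q\circ\sigma_P$, which is $\cplx$-linear. Writing $\sigma_P(\zeta)=R_P\ba{\zeta}$ one gets $T_{(P,Q)}=R_Q\ba{R_P}$, and since any $\Phi\in GL_2(\cplx)$ satisfies $\sigma_{\Phi(P)}=\Phi\sigma_P\Phi^{-1}$, the map $T$ transforms by conjugation; in particular $\det T$ is a $\cplx$-linear invariant of the pair (swapping $P,Q$ replaces $T$ by $T^{-1}$, so the property $\det T=1$ is an invariant of the unordered pair).

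Next I would compute this invariant on both sides. For $M_1:\,w=\ba{z}$ and $M_2:\,w=r\ba{z}+\varrho z$ a short computation gives $\sigma_{M_1}(z,w)=(\ba{w},\ba{z})$ and a real matrix for $\sigma_{M_2}$ of determinant $-1$; hence $T_{(M_1,M_2)}$ is a \emph{real} $2\times2$ matrix with $\det=1$, so every pair of the form $(**)$ has $\det T=1$ and real trace. On the other hand, for $M=(S+i\id)(\RR)$ with $S$ a real matrix the conjugation works out to $\sigma_M(\zeta)=(S+i\id)(S-i\id)^{-1}\ba{\zeta}$, the Cayley transform of $S$, so $T_{(\RR,M)}=(S+i\id)(S-i\id)^{-1}$. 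Because $\det(S-i\id)=(\det S-1)-i\operatorname{tr}S$ and $\det(S+i\id)=\ba{\det(S-i\id)}$, we obtain $\det T_{(\RR,M)}=\ba{\det(S-i\id)}/\det(S-i\id)$, which equals $1$ precisely when $\operatorname{tr}S=0$. Thus any choice with $\operatorname{tr}S_p\neq0$ already forbids $\cplx$-linear equivalence of $\{\RR,M_p\}$ with any pair of the form $(**)$.

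The cleanest realization is $S_p=p\,\id$, giving $M_p=(p+i)(\RR)$. This is totally real (a nonzero complex multiple of $\RR$), meets $\RR$ only at $0$ (the imaginary part of $(p+i)v$ is $v$), and has $\operatorname{tr}S_p=2p\neq0$. For local polynomial convexity I would invoke Kallin's Lemma with $\fie(z,w)=z^2+w^2$: on $\RR$ this lands in $[0,\infty)$ and on $M_p$ in the ray $(p+i)^2[0,\infty)$, which are two distinct rays from the origin for every $p\neq0$ (since $\arg(p+i)\in(0,\pi)$), while $\fie^{-1}(0)$ meets $\RR\cup M_p$ only at $0$; restricting to a small ball then yields local polynomial convexity of $\RR\cup M_p$ at $0$. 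All three displayed properties in the statement are therefore in force.

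The step I expect to require the most care is the non-reducibility, i.e. making the invariant argument airtight; note that we need only the forward implication, that a pair of the form $(**)$ has $\det T=1$, which follows from the computation above together with the conjugation-equivariance of $T$ — no completeness of the invariant is needed. For the explicit family this can also be checked by hand: a $\cplx$-linear $\Phi$ commutes with the scalar $p+i$, so if $\Phi(\RR)\cup\Phi(M_p)=M_1\cup M_2$ then $\{\Phi(\RR),(p+i)\Phi(\RR)\}=\{M_1,M_2\}$; since $M_1$ is fixed, $M_2$ would have to be $(p+i)M_1$ or $(p+i)^{-1}M_1$, both of the form $\{w=c\ba{z}\}$ with $c=(\tfrac{p+i}{p-i})^{\pm1}\notin\rea$ for $p\neq0$, contradicting the reality of $r,\varrho$ in $(**)$. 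Hence no such $\Phi$ exists, which establishes the Observation.
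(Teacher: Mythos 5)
Your proposal is correct, and it establishes the Observation by a genuinely different route than the paper, even though both use the very same family $S_p=p\,\id$, i.e.\ $M_p:=(S_p+i\id)(\RR)=(p+i)\RR$. For the non-existence of the $\cplx$-linear equivalence, the paper works in coordinates: it writes down the matrix that any candidate $T$ must have, converts the mapping requirements into a linear system in the unknowns $(r,\varrho)$ (and, for the swapped images, a system in $(\al_1,\al_2,\bt_1,\bt_2)$), and derives the contradictions $A\ba{B}\in\rea$ and $r=0$, respectively. You instead attach to an unordered pair of maximal totally-real planes the conjugation-equivariant operator $T_{(P,Q)}=\sigma_Q\circ\sigma_P$, note that pairs of the form $(**)$ have $\det T=1$ (both reflections being real matrices of determinant $-1$), compute $T_{(\RR,M)}=(S+i\id)(S-i\id)^{-1}$ so that $\det T_{(\RR,M)}=\ba{\det(S-i\id)}/\det(S-i\id)$, and conclude that ${\rm tr}\,S\neq 0$ is an obstruction; your scalar-commutation argument (any $\cplx$-linear $\Phi$ commutes with $p+i$, so $M_2$ would have to be $(p+i)^{\pm 1}M_1=\{w=c\ba{z}\}$ with $c\notin\rea$) gives an even shorter verification for this particular family. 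For the polynomial convexity, the paper cites Weinstock's criterion ($S_p$ has only real eigenvalues), whereas you apply Kallin's Lemma directly with $z^2+w^2$, whose images of $\RR$ and $M_p$ are segments of the two distinct rays $[0,\infty)$ and $(p+i)^2[0,\infty)$ meeting only at $0$ --- both verifications are valid. What your approach buys: it is coordinate-free; it identifies the exact invariant-theoretic obstruction (equivalence to a pair of the form $(**)$ forces ${\rm tr}\,S=0$), which quantifies the paper's informal remark that the reduction to $(**)$ works only for ``most pairs''; and it makes the Observation self-contained, with no appeal to Weinstock's theorem. What the paper's computation buys is that it stays within completely elementary linear algebra and reuses the normal-form setup that the paper needs elsewhere. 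One small point you should make explicit: the uniqueness of the antilinear involution $\sigma_P$ fixing a maximal totally-real plane $P$ pointwise (writing $\zeta=x+iy$ with $x,y\in P$ forces $\sigma_P(\zeta)=x-iy$), since both the equivariance $\sigma_{\Phi(P)}=\Phi\circ\sigma_P\circ\Phi^{-1}$ and the well-definedness of your invariant rest on it.
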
 
\smallskip

The details of the above are presented in sub-section~\ref{SS:counterEx}. We do not doubt
that the above two-step approach {\em could be made to work even when $r$ and $\varrho$ in $(**)$ 
take non-real values}, but this would require at least a more sophisticated Kallin polynomial
(hence much harder calculations) and may, perhaps, even require some further inputs besides those in \cite{FS}. 
Consequently, we try another approach by modifying some ideas of Weinstock --- which enables us
to deal with $\utgt{S_1}{S_2}$ without having to transform the planes to graphs --- to get
Theorem~\ref{T:transversePCVX} below. The latter method has the advantage that it is more
readily adapted to the problem of studying local polynomial convexity at $0\in \CC$ of the 
union of {\em more than two} totally-real planes in $\CC$ intersecting at $0$. The latter
problem is of some interest because it provides the means to investigate local polynomial 
convexity of a smooth real surface $S\subset \CC$ at a point $p\in S$ at which  $T_pS$ is a
complex line. This general principle was, in fact, introduced in \cite{FS}. In general --- as
the papers \cite{GB1} and \cite{GB2} reveal --- detecting local polynomial convexity 
at a degenerate ``non-parabolic'' complex-tangency would require the study of the union of more
than two totally-real surfaces, intersecting transversely at $0\in \CC$. These issues will be 
tackled in a different article. With this background, we can announce:

\begin{theorem}\label{T:transversePCVX}
 The union of two $\smoo^2$-smooth totally-real surfaces
 in $\mathbb{C}^2$ intersecting transversally only at the origin is locally polynomially convex if
 the union of their tangent spaces at the origin is locally polynomially convex at the origin.
\end{theorem}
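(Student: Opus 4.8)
The plan is to reduce the question to the two tangent planes, for which local polynomial convexity is the hypothesis, and then to transport that conclusion to $\usf$ by a \emph{stable} Kallin-type separation. First I would normalise: after an invertible $\cplx$-linear change of coordinates arrange $T_0S_1=\RR\subset\CC$ and realise each surface as a graph over its tangent space, $S_j=\{\zeta+\rho_j(\zeta):\zeta\in T_0S_j,\ |\zeta|\le\delta\}$, where $\rho_j$ is $\smoo^2$ with $\rho_j(0)=0$ and $d\rho_j(0)=0$, so that $\rho_j(\zeta)=O(|\zeta|^2)$. Both total-reality and the transversality $T_0S_1\cap T_0S_2=\{0\}$ survive this normalisation, and the goal becomes: produce $\delta>0$ for which $K_\delta:=(\usf)\cap\overline{B(0,\delta)}$ is polynomially convex.

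Next I would exploit Weinstock's analysis of the model $\utgt{S_1}{S_2}$. Since this union is locally polynomially convex at $0$, the idea is to extract from it a homogeneous quadratic polynomial $P$ (with no linear part) that realises Kallin's lemma for the planes: $P(T_0S_1)$ and $P(T_0S_2)$ lie in two closed sectors $\Gamma_1,\Gamma_2\subset\cplx$ with $\Gamma_1\cap\Gamma_2=\{0\}$, separated by a definite angle, together with the quantitative nondegeneracy $|P(\zeta)|\ge c|\zeta|^2$ on each plane (equivalently, $P$ does not vanish on $T_0S_j\setminus\{0\}$). The prototype is $P(z_1,z_2)=z_1^2+z_2^2$ after diagonalising the symmetric matrix presenting $T_0S_2$ over $\RR$: when the two defining eigenvalues share a sign one gets $P(\RR\setminus\{0\})\subset(0,\infty)$ while $\imag\,P$ on $T_0S_2\setminus\{0\}$ is of one sign and bounded below by $c|\zeta|^2$, which already furnishes the angular gap. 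Combined with the trivially polynomially convex fibre $P^{-1}(0)\cap(\utgt{S_1}{S_2})=\{0\}$ and the polynomial convexity of each plane, Kallin's lemma recovers the hypothesis for the model.

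The transfer to $\usf$ is then a perturbation computation. Writing $P(a)=Q(a,a)$ for the associated complex symmetric bilinear form, for $\zeta\in T_0S_j$ one has
\[
P(\zeta+\rho_j(\zeta))=P(\zeta)+2Q(\zeta,\rho_j(\zeta))+P(\rho_j(\zeta))=P(\zeta)+O(|\zeta|^3),
\]
since $\rho_j(\zeta)=O(|\zeta|^2)$. As $|P(\zeta)|\asymp|\zeta|^2$ on each plane, the error is of strictly higher order, so $\arg P(\zeta+\rho_j(\zeta))\to\arg P(\zeta)$ uniformly as $\zeta\to 0$. Hence for $\delta$ small the images $P(S_j\cap\overline{B(0,\delta)})$ lie in slightly enlarged sectors $\Gamma_j^{\eps}$ that still meet only at $0$; the fibre $P^{-1}(0)\cap K_\delta$ reduces to $\{0\}$; and each $S_j\cap\overline{B(0,\delta)}$ is polynomially convex, being a compact piece of a totally-real $\smoo^2$ manifold. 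A second application of Kallin's lemma then gives polynomial convexity of $K_\delta$.

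The delicate point is the construction of $P$ with a \emph{robust} (strictly positive) angular gap for \emph{every} tangent configuration compatible with the hypothesis, and this is where I expect the real work to lie rather than in the bookkeeping above. The naive choice $P=z_1^2+z_2^2$ fails precisely when the presenting eigenvalues have opposite signs: then $\imag\,P$ vanishes along a real direction of $T_0S_2$ and the image touches the ray carrying $P(T_0S_1)$, so the gap closes and the $O(|\zeta|^3)$ surface error can push $P(S_2)$ across into $\Gamma_1$. Overcoming this is exactly where Weinstock's ideas must be \emph{modified} rather than merely quoted: one must either show that such degenerate configurations are incompatible with local polynomial convexity of $\utgt{S_1}{S_2}$, or replace $P$ by a separating function adapted to the mixed-sign and borderline cases and verify that it retains a definite angular separation. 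Securing this nondegenerate separation is the main obstacle of the proof.
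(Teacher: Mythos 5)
Your proposal reproduces the skeleton of the paper's proof --- Weinstock's normal form for the tangent pair, a homogeneous quadratic Kallin polynomial with a quantitative sector gap, the $O(|\zeta|^3)$ perturbation transfer, and two applications of Kallin's lemma --- but it is incomplete at exactly the point you flag yourself: no separating polynomial is produced outside the configurations where $z_1^2+z_2^2$ happens to work. This is a genuine gap, not bookkeeping, because it is the only place where the hypothesis of the theorem can enter, and the first of the two escape routes you offer is a dead end. Write $T_0S_2=M(A):=(A+iI)\RR$ as in Weinstock's normal form, where $\RR$ denotes the real points of $\CC$. Weinstock's criterion says that $M(A)\cup\RR$ is locally polynomially convex at $0$ precisely when $A$ has no purely imaginary eigenvalue of modulus greater than $1$; a matrix with real eigenvalues of opposite signs (e.g.\ $A=\mathrm{diag}(p,q)$ with $pq<0$) never violates this, so the mixed-sign configurations you identify as problematic are squarely allowed by the hypothesis and cannot be excluded --- they must be separated by \emph{some} polynomial.

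The paper closes exactly this gap by changing the polynomial rather than the configuration. First $A$ is reduced by a real similarity (induced by a $\cplx$-linear map that preserves $\RR$ and carries $M(A)$ to $M(SAS^{-1})$) to one of three forms: real diagonal, a Jordan block $\begin{pmatrix}\lambda&1\\0&\lambda\end{pmatrix}$, or $\begin{pmatrix}s&-t\\t&s\end{pmatrix}$ --- note that your diagonalisation step silently omits the second and third. In the two real-spectrum forms the separating polynomial is taken to be $P(z)=\langle(A-iI)z,z\rangle$, where $\langle z,w\rangle:=z_1w_1+z_2w_2$ is the bilinear (not Hermitian) form. On $\RR$ this gives $\imag P=-(x^2+y^2)$, while on $M(A)$ it gives $\imag P=(p^2+1)x^2+(q^2+1)y^2$ in the diagonal case and $\imag P=\lambda^2x^2+y^2+(x+\lambda y)^2$ in the Jordan case: definite of opposite signs \emph{regardless of the signs of the eigenvalues}, with the lower bound $|\imag P|\gtrsim |\zeta|^2$ that your transfer step requires. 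So the mixed-sign difficulty evaporates once $z_1^2+z_2^2$ is abandoned there. The polynomial $F=z_1^2+z_2^2$ is reserved for the rotation form, where $F(M(A))$ lies along the ray through $\bigl((s^2+t^2-1)+2si\bigr)$: if $s^2+t^2<1$ then $\rl F$ is negative definite on $M(A)$ and nonnegative on $\RR$; if $s^2+t^2\ge 1$ the hypothesis forces $s\neq 0$ (this is the sole use of Weinstock's criterion, via ``$|t|<1$ whenever $s=0$''), so that ray is transverse to the positive real axis carrying $F(\RR)$ and thin sectors around the two images still meet only at $0$. With these choices of polynomial in the three cases, the perturbation argument and Kallin applications you outline go through essentially verbatim.
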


Weinstock \cite{Wk} gave a criterion for the union of two transverse, maximally totally-real
subspaces in $\cplx^n$ to be locally polynomially convex at the origin. Our proof of
Theorem~\ref{T:transversePCVX} relies upon a normal form, developed in \cite{Wk}, for a pair of 
totally-real planes intersecting transversely at $0\in \CC$, and on Kallin's Lemma (see
Lemma~\ref{L:kallin} below). We note that the condition stated in $(*)$ {\em cannot} be 
necessary and sufficient; see \cite[Example 5]{Wk}.
\smallskip

The following lemma is essential in setting the context for the next three theorems.

\begin{lemma}\label{L:arcPCVX}
 Let $M_j, \ j=1,2,$ be two distinct totally-real planes in $\CC$ containing the origin, such that
 $dim_\rea(M_1 \cap M_2)=1$. Then $\upl$ is locally polynomially convex at the origin.
\end{lemma}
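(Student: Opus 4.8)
The plan is to reduce $\upl$ to a normal form by an invertible $\cplx$-linear change of coordinates (which preserves local polynomial convexity at $0$) and then to separate the two planes by a single polynomial so that Kallin's Lemma (Lemma~\ref{L:kallin}) applies. Write $\ell := M_1 \cap M_2$; since $\dim_\rea \ell = 1$, after a linear change I may assume $\ell = \rea \times \{0\}$ and set $L := \cplx \times \{0\}$, the complexification of $\ell$, with coordinate projection $\pi(z,w) = w$. Total reality enters here: if $M_j$ contained a second real direction of $L$ it would contain all of $L$ and so fail to be totally real, whence $M_j \cap L = \ell$ and $\pi(M_j)$ is a \emph{real line} $m_j \subset \cplx$ through $0$ in the $w$-plane. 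The proof then splits according to whether $m_1$ and $m_2$ coincide.

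First I would treat the generic case $m_1 \neq m_2$. After a rotation $(z,w)\mapsto(z,e^{-i\theta}w)$ I may take $m_1 = \rea$, so that $m_2 = e^{i\phi}\rea$ with $\phi \in (0,\pi)$. Take $p(z,w) = w^2$. Since $\pi$ maps $M_j$ \emph{onto} the whole line $m_j$, the image $p(M_j)$ is a closed ray: $p(M_1) = [0,\infty)$ and $p(M_2) = e^{2i\phi}[0,\infty)$, and as $e^{2i\phi}\neq 1$ these are distinct rays lying in two closed sectors meeting only at $0$. Moreover $p^{-1}(0) \cap \upl = L \cap \upl = \ell$, a segment, which is polynomially convex, and each $M_j \cap \ba{B(0,r)}$ is polynomially convex because $M_j$ is a maximal totally-real subspace, hence $\cplx$-linearly equivalent to $\RR \subset \CC$, and compact subsets of $\RR$ are polynomially convex. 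Kallin's Lemma then gives the conclusion in this case.

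The remaining case $m_1 = m_2$ is the main obstacle, since now $p = w^2$ collapses both planes onto the same ray. Normalizing $m_1 = m_2 = \rea$ places $\upl$ inside the real hyperplane $\{\imag w = 0\}$, where $M_j = \{\imag z = b_j w,\ \imag w = 0\}$ with $b_1 \neq b_2$. A further $\cplx$-linear map $(z,w)\mapsto \bigl(z - i\tfrac{b_1+b_2}{2}w,\ w\bigr)$ followed by a scaling in $z$ reduces this to $M_1 = \{\imag z = w,\ \imag w = 0\}$ and $M_2 = \{\imag z = -w,\ \imag w = 0\}$. Now take $p(z,w) = zw$: on $M_j$ one computes $\imag\, p = \pm\, w^2$, so $p(M_1)$ lies in the closed upper half-plane and $p(M_2)$ in the closed lower half-plane; and since $\imag\, p = 0$ forces $w = 0$ (so the point lies on $\ell$ and $p = 0$), the two images meet only at the origin. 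Again $p^{-1}(0) \cap \upl = \ell$ is polynomially convex, so Kallin's Lemma applies.

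The step requiring the most care is this last case: the separation is by two \emph{half-planes} rather than proper sectors, so one must invoke the form of Kallin's Lemma that needs only $p(M_1) \cap p(M_2) = \{0\}$ together with polynomial convexity of $p(M_1) \cup p(M_2)$ in $\cplx$ — which holds because that union pinches to $0$ along the real axis and hence has connected complement. I would also verify at each reduction that the coordinate change is a genuine invertible $\cplx$-linear automorphism, so that polynomial convexity is preserved, and that the localized pieces $M_j \cap \ba{B(0,r)}$ remain polynomially convex throughout.
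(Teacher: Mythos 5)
Your proof is correct, but it is a genuinely different argument from the one the paper has in mind. The paper proves Lemma~\ref{L:arcPCVX} by the slicing method of Theorem~\ref{T:linePcvxFlat}: since the two planes span a real hyperplane $\mathbb{H}=\{(z,w):\imag P(z,w)=0\}$, one straightens by the biholomorphism $(z,w)\mapsto (z,P(z,w))$, slices by the polynomial $w$ (which is real-valued on the image), and applies Result~\ref{R:S1}; each fibre is a union of at most two line segments in a complex line, hence polynomially convex. You instead put the pair of planes into a normal form and separate them by an explicit Kallin polynomial: $w^2$ when the projections $m_1\neq m_2$ (images lying in two distinct rays), and $zw$ when $m_1=m_2$ (images lying in opposite closed half-planes). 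Both cases are sound; in the tangential case the images $p\bigl(M_j\cap \ba{B(0;r)}\bigr)$ are in fact closed elliptical discs tangent to $\rea$ at $0$ from opposite sides, so each equals its own hull, the hulls meet only at $0$, and $0$ is not an interior point of their union --- hence Lemma~\ref{L:kallin} exactly as stated already applies, and the stronger variant of Kallin's lemma you invoke at the end is not actually needed. As for what each approach buys: the slicing argument treats both configurations uniformly and is the one that extends to the curved surfaces of Theorem~\ref{T:linePcvxFlat}; your argument avoids Result~\ref{R:S1} altogether, produces explicit separating polynomials in the spirit of Sections~\ref{S:proof-transversePCVX} and~\ref{S:proof-linePCVX}, and your dichotomy $m_1=m_2$ versus $m_1\neq m_2$ is precisely condition (II) following the lemma ($span_\cplx\{M_1\cap M_2\}\subset span_\rea\{\upl\}$ or not), which makes transparent why the ``flat'' configuration is the fragile one: there the separation is only by tangent half-planes, exactly the geometry that Theorem~\ref{T:epsperturb} shows does not survive perturbation.
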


\noindent{We shall {\em not} give a separate proof for the above; the proof will follow along
similar lines as the proof of Theorem~\ref{T:linePcvxFlat} below. This lemma establishes that question 
$(*)$ remains valid when $dim_\rea(M_1 \cap M_2)=1$. Our next theorem shows that the answer to $(*)$ 
is not always affirmative when $dim_\rea (T_0S_1 \cap T_0S_2)=1$. Comparing this with 
Theorem \ref{T:linePCVX} will reveal that that there is no consistent answer to $(*)$ when 
\begin{itemize}
 \item[(I)] $dim_\rea (T_0S_1 \cap T_0S_2)=1$; and
 \item[(II)] $span_\cplx\{T_0S_1\cap T_0S_2\}\subset \ span_{\rea}\{\utgt{S_1}{S_2}\}$.
\end{itemize}
Refer to the remarks following Theorem~\ref{T:linePCVX} for a clarification of the last assertion.
\smallskip

Before stating Theorem \ref{T:epsperturb}, we need to define one term. Given a set 
$S \subset \CC$, we say that $S^\eps$ is an {\em$\eps$-perturbation} of $S$ if $S^\eps$ is 
the image of $S$ under a $\smoo^1$-diffeomorphism  $\Theta_\eps$ defined in a neighbourhood 
$U$ of $S$ such that $\|{\rm id}_U - \Theta_\eps \|_{\smoo^1} \lesssim \eps$.

\begin{theorem}\label{T:epsperturb}
 Let $M_j,~j=1,2$, be two distinct totally-real planes in $\CC$ containing the origin, such that
 $dim_\rea(M_1 \cap M_2)=1$ and $span_\cplx\{M_1 \cap M_2\}$ lies in the real hyperspace that contains
 $\upl$. Then for each $\eps >0$, there exists a $\dl>0$ and totally-real submanifolds $S_j^\eps,~j=1,2,$
 of $B(0;2\dl)$ such that:
 \begin{itemize}
  \item $S_j^\eps \cap \ba{B(0;\dl)}$ are $\eps$-perturbations of $M_j \cap \ba{B(0;\dl)},~~j=1,2,$
  \item $T_0S_1^\eps \cup T_0S_2^\eps = \upl$,
 \end{itemize}
 and such that $S_1^\eps \cup S_2^\eps$ is not polynomially convex.
\end{theorem}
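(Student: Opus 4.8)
The plan is to exhibit the failure of polynomial convexity by attaching a genuine analytic disc to $S_1^\eps\cup S_2^\eps$, and the whole mechanism hinges on hypothesis (II): it forces the complex line $L:=\mathrm{span}_\cplx\{M_1\cap M_2\}$ to lie inside the real hyperspace $H$ spanned by $\upl$. First I would fix a convenient normal form. Since $L\subset H$, after an invertible $\cplx$-linear change of coordinates I may assume $M_1\cap M_2$ is the $x_1$-axis, $L=\{w=0\}$, and $H=\{\imag\,w=0\}$; as each $M_j\subset H$ contains the $x_1$-axis and is totally real, a short computation gives the graphs $M_j=\{\,w=\tfrac1{a_j}\imag\,z\,\}$ over the $z$-plane, with $a_1,a_2\in\rea$ distinct (a preliminary shear $(z,w)\mapsto(z+icw,w)$ shifts both $a_j$ by the same real constant, so I may take $a_1,a_2\neq0$). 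The feature to record is that, precisely because $L\subset H$, each $M_j$ meets $L$ in the entire $x_1$-axis rather than in an isolated point, and this one-dimensional trace persists under perturbations that keep the surfaces inside $H$.

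Next I would build the perturbation, bending only $M_1$. Fix a $\smoo^\infty$ bump $\eta\geq0$ supported in $[\dl/3,2\dl/3]$, positive on the interior, with $\|\eta\|_{\smoo^1}\lesssim\eps$, and set $S_1^\eps=\{\,w=\tfrac1{a_1}(\imag\,z-\eta(\rl\,z))\,\}$ and $S_2^\eps=M_2$. Because $\eta$ vanishes near $0$, the surface $S_1^\eps$ agrees with $M_1$ there, so $T_0S_1^\eps=M_1$ and $T_0S_2^\eps=M_2$; because $\eta$ is real-valued with small $\smoo^1$ norm, $S_1^\eps$ is an $\eps$-perturbation of $M_1$ realized by the $\smoo^1$-small shear $\Theta_\eps(z,w)=(z,\,w-\tfrac1{a_1}\eta(\rl\,z))$, and since the gradient of $\tfrac1{a_1}(\imag\,z-\eta(\rl\,z))$ never vanishes, $S_1^\eps$ is totally real. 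The decisive effect is on the traces in $L$: one has $S_1^\eps\cap L=\{\imag\,z=\eta(\rl\,z)\}$, a curve that coincides with the $x_1$-axis outside $[\dl/3,2\dl/3]$ but bulges into $\{\imag\,z>0\}$ over that interval, whereas $S_2^\eps\cap L=M_2\cap L$ is the $x_1$-axis itself.

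I would then close up the disc. Over $[\dl/3,2\dl/3]$ the bulged arc $\{\imag\,z=\eta(\rl\,z)\}\subset S_1^\eps$ and the corresponding segment of the $x_1$-axis $\subset S_2^\eps$ together form a simple closed curve $\gamma\subset S_1^\eps\cup S_2^\eps$ lying in the complex line $L$, bounding the Jordan domain $\OM=\{\rl\,z\in(\dl/3,2\dl/3),\ 0<\imag\,z<\eta(\rl\,z)\}\subset L$. Since the coordinate $w$ vanishes on $\gamma\subset L=\{w=0\}$, the hull $\hull{\gamma}$ is contained in $L$, where it equals $\overline{\OM}$; hence $\overline{\OM}\subset\hull{(S_1^\eps\cup S_2^\eps)}$. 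It remains to check that $\OM$ escapes the surfaces: each of its points has $w=0$ and $0<\imag\,z<\eta(\rl\,z)$, so it lies on neither trace $S_j^\eps\cap L$, and as the $S_j^\eps$ meet $L$ only along those traces, $\OM\cap(S_1^\eps\cup S_2^\eps)=\varnothing$. Therefore $\hull{(S_1^\eps\cup S_2^\eps)}\supsetneq S_1^\eps\cup S_2^\eps$, so $S_1^\eps\cup S_2^\eps$ is not polynomially convex, and since $\OM$ sits at scale $\dl$ inside the ball, everything is local.

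The routine matters above are the verifications of total reality, of the tangent-plane equalities, and of the $\eps$-bound. The step carrying the real content — and the one I expect to be the main obstacle — is arranging all three simultaneously: one must bend $M_1$ enough to enclose positive area in $L$ (so that $\OM$ is nondegenerate), while keeping the perturbation $\smoo^1$-small and leaving the $2$-jet at the origin unchanged. Supporting $\eta$ away from $0$ resolves this tension. Equally important, it is exactly hypothesis (II), i.e. $L\subset H$, that keeps the traces $S_j^\eps\cap L$ one-dimensional and hence capable of enclosing a disc; were $L\not\subset H$, a perturbation into $\mathbb{C}^2$ would generically cut $L$ in isolated points and no such analytic disc could arise — which is what makes the answer to $(*)$ swing with the higher-order data under (I) and (II).
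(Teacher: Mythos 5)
Your construction is internally consistent as far as it goes --- the normal form $M_j=\{w=\tfrac1{a_j}\imag z\}$, the shear reducing the perpendicular case to the graph case, the total reality of the bumped graph, and the disc argument in the complex line $L=\{w=0\}$ are all correct --- but it proves a strictly weaker assertion than the theorem is making, and the weakness is exactly the feature you advertise as the resolution of the ``tension''. Because your bump $\eta$ is supported in $[\dl/3,2\dl/3]$, your union $S_1^\eps\cup S_2^\eps$ coincides with $\upl$ in a neighbourhood of the origin, and hence, by Lemma~\ref{L:arcPCVX}, it \emph{is} locally polynomially convex at the origin; the hull points you produce (the region $\OM$) sit at distance $\approx\dl/3$ from $0$. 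What the theorem asserts --- as the paper's own proof makes explicit when it concludes that the perturbed union is ``not polynomially convex \emph{at the origin}'', and as its role in the paper requires --- is failure of polynomial convexity in \emph{every} neighbourhood of $0$: the paper constructs a one-parameter family of closed analytic discs $\mathfrak{A}_{c}$ with boundaries in $S_1^\eps\cup S_2^\eps$ and $\mathfrak{A}_{c}\to\{0\}$ as $c\searrow 0$. This stronger conclusion is the whole point: Theorem~\ref{T:epsperturb} is the negative half of the answer to question $(*)$ under conditions (I) and (II) (see item 3 of the introduction and the remark following Theorem~\ref{T:linePCVX}), and $(*)$ concerns local polynomial convexity \emph{at the origin}. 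An example that is locally polynomially convex at $0$ but has a bump-induced hull far away says nothing about $(*)$. Your guiding principle --- ``keeping the perturbation $\smoo^1$-small and leaving the $2$-jet at the origin unchanged'' --- is a misreading of the requirements: only the tangent planes (first-order data) must be preserved, and preserving the $2$-jet is precisely what you must \emph{not} do.

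The repair is the paper's construction, which keeps your slicing idea but makes the perturbation active at the origin. Writing $M_j=\{(x+iy,C_j y)\}$ with $C_j=1/a_j$ distinct and nonzero, perturb both graphs by $F_j^\eps(x,y)=\eps x^2+\phi_j(z)$ with $\phi_j$ real-valued and $o(|z|^2)$; then $T_0S_j^\eps=M_j$ still holds and the $\eps$-perturbation bound is easy. Now slice by the complex lines $\{w=c\}$ for \emph{all} small $c>0$, rather than by $L$ itself: the traces of the two perturbed surfaces in such a slice project to the parabola-like curves $\{C_jy+F_j^\eps(x,y)=c\}$, which have distinct vertex heights $c/C_j$, cross near $x=\pm\sqrt{c/\eps}$, and so enclose a bounded region $\wtil{\mathfrak{D}}(\eps,c)$ that shrinks to $\{0\}$ as $c\searrow 0$. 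The closed discs $\ba{\wtil{\mathfrak{D}}(\eps,c)}\times\{c\}$ have boundaries in $S_1^\eps\cup S_2^\eps$, interiors disjoint from it, and accumulate at the origin, killing polynomial convexity in every neighbourhood of $0$. (Your shear $(z,w)\mapsto(z+icw,w)$ is a genuine simplification of the paper's two-case analysis and is worth keeping; only the form of the perturbation needs to change.)
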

\noindent Here, $B(a;r)$ denotes the Euclidean ball in $\CC$ with centre at $a$ and radius $r>0$.
\smallskip 

Theorem \ref{T:epsperturb} raises the following question: {\em what can we say if $S_1$ and 
$S_2$ are as in $(*)$, $dim_\rea (T_0S_1 \cap T_0S_2)=1$, and 
$span_\cplx\{T_0S_1\cap T_0S_2\}\nsubseteq \ span_{\rea}\{\utgt{S_1}{S_2}\}$~?} In response to 
this question we have the following result:

\begin{theorem}\label{T:linePcvxFlat}
 Let $S_1$ and $S_2$ be two $\smoo^2$-smooth surfaces in $\CC$ that contain the origin. Assume that
 \begin{itemize}
  \item $dim_\rea (T_0S_1 \cap T_0S_2)=1$; and
  \item $span_\cplx\{T_0S_1\cap T_0S_2\}\nsubseteq \ span_{\rea}\{\utgt{S_1}{S_2}\}$.
 \end{itemize}
 If $(S_1\cup S_2)\subset \ span_{\rea}\{\utgt{S_1}{S_2}\}$, then $S_1\cup S_2$ is locally polynomially
 convex at the origin.
\end{theorem}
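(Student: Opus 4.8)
The plan is to reduce to a normal form and then fiber the polynomial hull over the real line. First I would normalize coordinates. Since $dim_\rea(T_0S_1+T_0S_2)=3$, the subspace $H:=span_\rea\{\utgt{S_1}{S_2}\}$ is a real hyperplane in $\CC$, and $\usf\subset H$ by hypothesis. After an invertible $\cplx$-linear change of coordinates (which preserves polynomial convexity) I may assume $H=\{(z,w):\imag w=0\}=\cplx_z\times\rea_t$, whose unique complex line is $\ell=H\cap iH=\{w=0\}$. The hypothesis $span_\cplx\{T_0S_1\cap T_0S_2\}\nsubseteq H$ says precisely that the line $L:=T_0S_1\cap T_0S_2$ is not contained in $\ell$, i.e.\ $L$ has nonzero $w$-component; a further $\cplx$-linear shear preserving $H$ then carries $L$ to the $t$-axis $\{0\}\times\rea_t$. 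In these coordinates each $T_0S_j$ is a $2$-plane inside $H$ containing the $t$-axis, hence of the form $\rea\zt_j\oplus\rea_t$ with $\zt_j\in\cplx_z$, and $\zt_1,\zt_2$ are $\rea$-linearly independent because $T_0S_1\neq T_0S_2$. A rotation $z\mapsto e^{i\tht}z$ (still preserving $H$) then lets me take $\zt_1=1$.

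Next I would confine the hull to $H$. Writing $K:=\usf\cap\ba{B(0;\dl)}$ for small $\dl$, I note that $\pm\imag w=\rl(\mp i w)$ is the real part of a holomorphic polynomial, hence attains its maximum over $\hull{K}$ already on $K$; since $\imag w\equiv 0$ on $K\subset H$, I get $\imag w\equiv 0$ on $\hull{K}$. Thus $\hull{K}\subset H$, and the polynomial $w$ maps $\hull{K}$ into the compact real interval $X:=w(K)\subset\rea$.

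The main device is then the fibration of the hull over the thin set $X$. Because $X\subset\rea$ has empty interior and connected complement in $\cplx$, the standard slicing lemma applies: any analytic disc contained in $\hull{K}$ must project under $w$ to a constant, giving $\hull{K}\cap\{w=c\}=\hull{K_c}$, the hull taken inside the complex line $\{w=c\}\cong\cplx$, for every $c\in X$, where $K_c:=\usf\cap\{w=c\}$. It therefore suffices to show each fiber $K_c$ is polynomially convex in $\cplx$, i.e.\ that $\cplx\setminus K_c$ is connected. For $|c|$ small, writing $S_1$ as a $\smoo^2$-graph $y=f_1(x,t)$ over $T_0S_1=\{y=0\}$ and $S_2$ as a graph over $T_0S_2$, the slice $K_c$ is a union of two $\smoo^2$-arcs $A_1(c),A_2(c)$, where $A_j(c)$ is a small-slope graph over the line $\rea\zt_j$ (slopes $O(\dl)$, passing within $O(\dl^2)$ of the origin). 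Since $\zt_1\neq\zt_2$, the tangent directions of $A_1(c)$ and $A_2(c)$ stay uniformly transverse, so the map $(x,s)\mapsto\gamma_1(x)-\gamma_2(s)$ has everywhere-invertible differential and the two arcs meet in at most one transverse point. A union of two arcs meeting in at most one point is acyclic and does not separate the plane, so $\cplx\setminus K_c$ is connected and $\hull{K_c}=K_c$. Hence $\hull{K}=\bigcup_{c\in X}\hull{K_c}=\bigcup_{c\in X}K_c=K$, which is the desired local polynomial convexity.

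The hard part will be this last fiber analysis: one must justify, uniformly in $c$ across the whole range $X$ and with only $\smoo^2$ regularity, both the graphing of the two slices and the claim that they cannot enclose a region. The delicate case is $c$ near $0$, where both arcs crowd toward the origin; there the uniform transversality coming from $\zt_1\neq\zt_2$ (a consequence of $dim_\rea(T_0S_1\cap T_0S_2)=1$) is exactly what forces the at-most-one-intersection, and hence the absence of any bounded complementary component, after shrinking $\dl$. I would also take care to invoke the slicing lemma in the precise form needed (the behaviour of a hull projecting onto a set of empty interior with connected complement), since that is what converts the global hull problem into the one-variable slice problem.
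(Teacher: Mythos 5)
Your proposal is correct and is essentially the paper's own proof: both arguments move everything into the real hyperplane $\{\imag w = 0\}$ by a $\cplx$-linear change of coordinates, slice by the polynomial $w$ (real-valued on the union), invoke the fibration result for hulls --- the precise form you need is the paper's Result~\ref{R:S1}, i.e.\ Stout's Theorem~1.2.16, which is what rigorously replaces your heuristic analytic-disc justification of the slicing step --- and then check that each slice is a union of two uniformly transverse arcs meeting in at most one point, hence non-separating and polynomially convex. The only substantive difference is your extra shear sending $T_0S_1\cap T_0S_2$ to the $t$-axis, a nice simplification that collapses the paper's two-case analysis (Cases~I and~II of Section~\ref{S:proof-linePcvxFlat}) into the single configuration of its Case~II.
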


\begin{remark}\label{Rem:dieu}
Unbeknownst to me, Dieu had announced the following result in \cite{D}:
\begin{itemize}
\item[{}] \begin{result}[Prop.~2.2, \cite{D}]\label{R:dieuWrong}
Let $\varphi$ be a real-valued function defined in a neighbourhood of $0\in\cplx$ and of class
$\smoo^1$. Define
\begin{align}
 S_1 &:= \{(z,w)\in \CC: w=\zbar \}, \notag \\
 S_2 &:= \{(z,w)\in {\sf Dom}(\varphi)\times \cplx: w=(1+\lambda)\zbar + \ba{\lambda}z + \varphi(z)\}\; (\lm \neq 0,-1). \notag
\end{align}
Then $S_1\cup S_2$ is {\em not} locally polynomially convex at $0$ if and only if
\begin{itemize}
\item[i)] $\lambda$ is real; and
\item[ii)] For every $t$ sufficiently close to $0\in \rea$, the set $\{z \in {\sf Dom}(\varphi):\rl(z)=t/2, \;
2\lambda\rl(z)+\varphi(z)=0\}$ contains at most one component.
\end{itemize}
\end{result}
\end{itemize}
\noindent{It was brought to my notice that Theorem~\ref{T:linePcvxFlat} follows immediately from 
the above result; or --- at any rate --- in the generic arrangement of tangents when 
$T_0S_1=\{(z,w)\in \CC: w=\zbar \}$ and $T_0S_2=\{(z,w)\in \CC: w=(1+\lambda)\zbar + \ba{\lambda}z\}$
and when $S_1$ and $S_2$ are $\smoo^{\om}$-surfaces (in which case $S_1$ can always be taken as $\{(z,w)\;:\;w=\zbar\}$
locally). In this setting, the argument would go as follows:
\begin{itemize}
\item The condition in Result~\ref{R:dieuWrong} that $\varphi$ be real-valued is equivalent to
our condition $(S_1\cup S_2)\subset \ span_{\rea}\{\utgt{S_1}{S_2}\}$ (in Theorem~\ref{T:linePcvxFlat});
and
\item The {\em negation} of the condition (i) in Result~\ref{R:dieuWrong} is equivalent to our
condition $span_\cplx\{T_0S_1\cap T_0S_2\}\nsubseteq \ span_{\rea}\{\utgt{S_1}{S_2}\}$.
\end{itemize}
However, it turns out that {\em the condition $[(i)~\text{AND}~(ii)]$ is neither necessary nor sufficient
for $S_1\cup S_2$ to fail to be polynomially convex.} A demonstration of this is presented in
sub-section~\ref{SS:counterDieu} below. This observation also demands that we prove 
Theorem~\ref{T:linePcvxFlat} from scratch.} 
\end{remark}

We now consider totally-real graphs $S_j, \ j=1,2$. When $dim_\rea(T_0S_1 \cap T_0S_2)=1$,
then one expects polynomial convexity to be influenced by the higher-order terms in the graphing 
functions. This is the intuition behind the next theorem. Given such graphs, it can be shown that there
is a global holomorphic change of coordinates with respect to which $S_1$ and $S_2$ have the representations
given in Theorem \ref{T:linePCVX}. To reiterate: the representations of the graphs $S_1$ and $S_2$ in the 
first half of Theorem \ref{T:linePCVX} {\em are not simplifying assumptions}.

\begin{theorem}\label{T:linePCVX}
Let $S_j,~j=1,2$, be two $\smoo^\infty$-smooth totally-real surfaces in $\cplx^2$ containing the origin such that
$T_0S_1 \neq T_0S_2$ and $T_0S_1\cap T_0S_2$ contains a real line. In a neighbourhood $U$
of the origin, we present:
 \begin{align}
  S_{1}\cap U &= \{(z,\zbar+\overline{A} z^2 + A \zbar^2+C_1 z \zbar+ O(|z|^3) ): z \in D(0;\dl)\}, \notag \\
  S_{2}\cap U &= \{(z,\zbar+\overline{\lambda}z+\lambda\zbar +\phi_{2}(z)):
  z\in D(0;\dl)\}, \notag
 \end{align}
 where $\dl>0$, $\phi_2 \in \smoo^\infty(D(0;\dl))$ and
 $ \phi_2(z)= A_2 z^2 +B_2 \zbar^2+C_2 z \zbar+O(|z|^3).$\\
 \smallskip
 Suppose:
 \begin{enumerate}
 \item[$(i)$](Non-degeneracy condition) $\imag(C_1) \neq 0$,
  $\imag(\frac{(\overline{A_2}-B_2)\lbar^2}{|\lambda|^2}+ C_2)\neq 0$ and have opposite signs;
\item[$(ii)$] ${\rm sgn}(\imag(C_1))\imag\left((\overline{A_2}-B_2)\lbar\right)< 
\frac{1}{2}\left|\imag \left (\frac{(\overline{A_2}-B_2)\lbar^2}{|\lambda|^2}+ C_2\right)\right|$.
\end{enumerate}
 Then $\usf$ is locally polynomially convex at the origin.
\end{theorem}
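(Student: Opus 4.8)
The plan is to derive the result from Kallin's Lemma (Lemma~\ref{L:kallin}), along the lines of the planar Lemma~\ref{L:arcPCVX}. Because $S_1$ and $S_2$ are $\smoo^2$-smooth and totally real, for $\dl$ small each of $S_j\cap\ba{B(0;\dl)}$ is polynomially convex, so the whole matter reduces to producing a holomorphic polynomial $P$ for which the images $P(S_1\cap\ba{B(0;\dl)})$ and $P(S_2\cap\ba{B(0;\dl)})$ have polynomial hulls meeting only at the origin (for instance, lying on two distinct lines/rays, or on opposite sides of $0$ in thin sectors), while $P^{-1}(0)\cap\usf$ is a totally-real arc through $0$, hence polynomially convex. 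A preliminary $\cplx$-linear normalization --- a rotation $z\mapsto e^{i\al}z$ together with a scaling --- lets me assume $\lm$ is real and positive; the quantities appearing in $(i)$ and $(ii)$, such as $(\overline{A_2}-B_2)\lbar^2/|\lm|^2$, are precisely the rotation-invariant combinations of the second-order coefficients that this normalization produces, so nothing is lost.

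The choice of $P$ is dictated by the tangent geometry. The planes $T_0S_1\!:\,w=\zbar$ and $T_0S_2\!:\,w=(1+\lm)\zbar+\lbar z$ share the real line $\gamma_0=\{(-i\lm t,\,i\lbar t):t\in\rea\}$, and the holomorphic linear form $\beta(z,w):=\lbar z+\lm w$ vanishes on $\gamma_0$: indeed $\beta|_{T_0S_1}=2\rl(\lbar z)\in\rea$ and $\beta|_{T_0S_2}=(1+\lm)\,2\rl(\lbar z)\in(1+\lm)\rea$. Thus $\beta$ collapses the shared arc to the origin and sends the two tangent planes onto two lines through $0$ --- this is exactly the mechanism behind Lemma~\ref{L:arcPCVX}. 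Any polynomial not vanishing on $\gamma_0$ (such as $zw$) forces $P(S_1)$ and $P(S_2)$ to share an entire arc, which is fatal; so $\beta$ is the correct building block, and in the critical regime $\lm\in\rea$ it is a real multiple of $z+w$.

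The theorem's content is the passage from the tangent planes to the surfaces, which is purely a second-order effect. Substituting the graphing functions gives $\beta|_{S_1}=2\rl(\lbar z)+\lm(\overline{A}z^2+A\zbar^2+C_1|z|^2)+O(|z|^3)$; the linear term is real, and in the quadratic term the conjugate pair $\overline{A}z^2+A\zbar^2$ contributes nothing to $\imag\beta$, leaving $\imag(\beta|_{S_1})=\imag(C_1)\,|z|^2+O(|z|^3)$ after normalization. This is why $\imag(C_1)$ governs the side of the line-image on which $\beta(S_1)$ lies. The analogous computation on $S_2$ leaves, once the inessential holomorphic part of its quadratic is absorbed, an oscillating term of amplitude $|\imag((\overline{A_2}-B_2)\lbar)|$ together with the isotropic term $\imag\!\big(\tfrac{(\overline{A_2}-B_2)\lbar^2}{|\lm|^2}+C_2\big)|z|^2$. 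Condition $(i)$ --- these two distinguished imaginary parts are nonzero and of opposite sign --- makes $\beta(S_1)$ and $\beta(S_2)$ approach the origin from opposite sides of the common line-image, while condition $(ii)$ is exactly the inequality forcing the sign-definite $|z|^2$-term for $S_2$ to dominate its oscillation, so that $\imag(\beta|_{S_2})$ keeps one sign. The upshot should be $\widehat{\beta(S_1)}\cap\widehat{\beta(S_2)}=\{0\}$, whereupon Kallin's Lemma finishes the proof.

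The main obstacle I anticipate is precisely this degeneracy. Since the tangent planes meet only in a line, $\beta$ has a \emph{real} leading term on both surfaces, so there is no first-order separation and everything hinges on a uniform estimate of $\imag\beta$ over all directions $z=re^{i\tht}$ as $r\to0$ --- the worst case being the directions near $\gamma_0$, where the linear term vanishes and the $O(|z|^3)$ remainder competes with the quadratic. A further delicacy is that the shear removing the inessential (real-part) quadratic oscillation of $S_2$ simultaneously perturbs the clean expression for $\beta|_{S_1}$; reconciling the two, so that the surviving data is exactly the imaginary-part combinations in $(i)$--$(ii)$, is likely to require replacing $\beta$ by a carefully tuned quadratic polynomial that still vanishes on $\gamma_0$. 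Making this choice, estimating $\imag$ uniformly in $\tht$, checking that the two image sets really do meet only at $0$ rather than in a thin overlapping wedge, and verifying that the preimage $P^{-1}(0)\cap\usf$ stays polynomially convex once the surfaces are curved, is where the genuine work --- and the exact role of the hypotheses $(i)$ and $(ii)$ --- resides.
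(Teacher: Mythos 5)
Your overall skeleton --- Kallin's Lemma applied to a polynomial whose linear part is real on both tangent planes, with separation coming from the sign of the second-order imaginary part, condition $(i)$ supplying opposite signs and condition $(ii)$ taming the oscillating quadratic term --- is indeed the paper's strategy. But the proposal contains one outright error and leaves the decisive construction undone. The error is the opening normalization: you cannot ``assume $\lm$ is real and positive.'' Whether $\lm\in\rea$ is a biholomorphic invariant of the configuration: as the paper notes in Remark~\ref{Rem:dieu}, $\lm\in\rea$ is equivalent to $span_\cplx\{T_0S_1\cap T_0S_2\}\subset span_\rea\{\utgt{S_1}{S_2}\}$, and this containment is preserved by every invertible $\cplx$-linear map. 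Concretely, the rotation-plus-scaling maps you invoke must act as $(z,w)\mapsto(a z,\ba{a}w)$ in order to preserve the form $w=\zbar$ of $S_1$, and such a map sends the coefficient pair $(\lbar,\lm)$ of $T_0S_2$ to $((\ba{a}/a)\lbar,\ \lm)$: it destroys the conjugate-pair normal form $w=\zbar+\lbar z+\lm\zbar$ rather than rotating $\lm$ into $\rea$. So non-real $\lm$ must be treated head-on, and there your polynomial $\beta=\lbar z+\lm w$ fails: restricted to $S_1$, $\imag\beta=\imag\bigl(\lm(\ba{A}z^2+A\zbar^2+C_1|z|^2)\bigr)+O(|z|^3)$, which contains the oscillating term $\imag(\lm)(\ba{A}z^2+A\zbar^2)$ of amplitude $2|A||\imag(\lm)|$; hypotheses $(i)$--$(ii)$ never mention $A$, so nothing controls it. The guiding heuristic is also off: Kallin's Lemma does not need the polynomial to vanish on $\gamma_0$, it needs the linear part to be \emph{real-valued} on both tangent planes. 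Up to real scalars there is exactly one such linear form, namely $z+w$ (on $T_0S_2$ one has $z+w=2\rl(z)+2\rl(\lbar z)\in\rea$ for \emph{every} $\lm$); your $\beta$ has this property only when $\lm\in\rea$, in which case it is just $\lm(z+w)$.

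Second, even granting real $\lm$, the proposal stops exactly where the proof begins: the ``carefully tuned quadratic polynomial'' you defer to future work is the entire content of the theorem. The paper takes $P(z,w)=z+w+\albar z^2+\al w^2$ with $\al$ determined by $\ba{A_2}=B_2+2\al\lm$. On $S_1$ the added quadratic restricts, to second order, to the conjugate pair $\albar z^2+\al\zbar^2$, so $\imag P|_{S_1}=\imag(C_1)|z|^2+O(|z|^3)$ for every $\lm$ --- this is how the $A$-dependence is eliminated without any normalization. On $S_2$ the same choice of $\al$ turns the $z^2$ and $\zbar^2$ coefficients into a conjugate pair as well, leaving
\[
\imag P|_{S_2}=\imag\Bigl(\tfrac{(\ba{A_2}-B_2)\lbar^2}{|\lm|^2}+C_2\Bigr)|z|^2
+\imag\Bigl(\tfrac{(\ba{A_2}-B_2)\lbar}{2|\lm|^2}\Bigr)(\lbar z+\lm\zbar)^2+O(|z|^3),
\]
and since $0\le(\lbar z+\lm\zbar)^2\le 4|\lm|^2|z|^2$, condition $(ii)$ is precisely what forces the isotropic term to dominate the oscillation, while condition $(i)$ places $P(S_1(\dl))$ and $P(S_2(\dl))$ in opposite closed half-planes meeting only at $0$. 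Note also that $P^{-1}\{0\}\cap\usf$ is then the single point $\{0\}$, not a totally-real arc: under $(i)$ the two surfaces meet only at the origin near $0$, and the sign-definiteness of $\imag P$ on each punctured surface forces $P\neq 0$ away from it. Producing this $\al$ (or an equivalent device) and carrying out the uniform estimate over all directions is the real work; as it stands, the proposal is a plan for a proof, not a proof.
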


\begin{remark}
The conditions $(i)$ and $(ii)$ might look somewhat artificial at first glance, but we formulated
them with the following phenomenon in mind. When $\ba{A_2}=B_2$ and $\imag{(C_j)}=0, ~j=1,2$, then the resulting
graphs $S_1^0$ and $S^2_0$ are in fact examples of the surfaces discussed in Theorem \ref{T:epsperturb}. Still keeping
$\ba{A_2}=B_2$, we see that if we alter the coefficients $C_j$ slightly so that $\imag{(Cj)}= \eps,~j=1,2$, then the
resulting $\usf$ is an $\eps$-perturbation of $S_1^0 \cup S_2^0$, and the local hull of $S_1^0 \cup S_2^0$ collapses
under the perturbation. Summarizing in a coordinate-free manner: when $dim_\rea(T_0S_1 \cap T_0S_2)=1$ and
\[
span_\cplx\{T_0S_1\cap T_0S_2\}\subset \ span_{\rea}\{\utgt{S_1}{S_2}\},
\]
it is possible for $S_1\cup S_2$ to not be locally polynomially convex at $0$ and yet, given any $\eps>0$, 
admit $\eps$-perturbations $S^\eps_j$ with $T_0S^\eps_j=T_0S_j, \ j=1,2$, such that $S_1^\eps \cup S_2^\eps$ 
is locally polynomially convex at the origin. I.e., when the pair $(S_1,S_2)$ has the properties (I) and (II) 
listed just after Lemma~\ref{L:arcPCVX}, then the question $(*)$ has 
no coherent answer.
\end{remark}

A few words about the layout of this paper. We would first like to conclude the technical discussion on
the relationship between a couple of theorems and the folk results to which they seem associated. This will
be the subject of the next section. Section~\ref{S:technical} will elaborate on some technical preliminaries
needed in the proofs of our results. The proofs of our four theorems will be found in Sections
\ref{S:proof-transversePCVX}--\ref{S:proof-linePCVX}.
\medskip

\section{Relations with known results}\label{S:relations}

\subsection{Concerning Observation~\ref{O:counterEx}}\label{SS:counterEx} Consider the two 
planes: $P_1:=\RR$ and $P_2:=span_{\rea}\{(s,t), (\sm,\tau)\}$ --- $s,t,\sm,\tau\in \cplx$
--- with $P_1\cap P_2=\{0\}$. First, note that if there exists a $\cplx-$linear, invertible map
$T: \CC\longrightarrow \CC$ such that
\begin{align}
 T(P_1) &= \{(z,w)\in \CC: w=\zbar \}, \notag\\
 T(P_2) &= \{(z,w)\in \CC: w=r\zbar + \varrho z \}, \;\; r\neq 0, \ (r,\varrho)\in \RR\setminus\{(1,0)\},
 \label{E:order1}
\end{align}  
then $T$ must have the matrix representation $M_T$ (with respect to the standard basis) given by
\[ M_T=
\begin{pmatrix}
 A & B \\
\ba{A} & \ba{B}
\end{pmatrix},
\]
where $A(=\al_1+i\al_2), \; B(=\bt_1+i\bt_2) \in \cplx$ and, for invertibility
$A\ba{B} \notin \rea$. If, however, we interchange the desired images of $P_1$ and $P_2$ under $T$, then
$T$ must have the following matrix representation:
\begin{equation} \label{E:mat2}
 M_T=
\begin{pmatrix}
 A & B \\
r\ba{A}+\varrho A & r\ba{B}+\varrho{B}
\end{pmatrix}.
\end{equation}
Motivated by Weinstock's work \cite{Wk}, we shall focus on $P_2(:=span_{\rea}\{(s,t), (\sm,\tau)\})$
determined by
\[
 \begin{pmatrix}
  s & \sm \\
  t & \tau   
 \end{pmatrix}
= \begin{pmatrix}
   p+i & 0 \\
   0 & q+i
  \end{pmatrix}
\]
(which gives one of the three normal forms for a pair of totally-real planes in $\CC$ intersecting
transversely at $0\in\CC$).
\smallskip

$T$ having the mapping properties given in \eqref{E:order1}
exists (and we will implicitly view the necessary conditions
as a linear system with $r$ and $\varrho$ as unknowns):
\begin{align}
 \Rightarrow \;\; &
\begin{cases}
\ba{A}(p-i)r+ A(p+i)\varrho &= \ \ba{A}(p+i)\\
\ba{B}(q-i)r+ B(q+i) \varrho &= \ \ba{B}(q+i)
\end{cases} \notag \\
& \quad \text{\em has a solution in} \; \RR\setminus ((\{0\}\times\rea)\cup \{(1,0)\}) \notag \\
& \quad \text{\em for some $(A,B)\in \CC$ such that $A\ba{B}\notin \rea$.} \notag
\end{align}
Considering real and imaginary parts separately, the existence of the desired $T$
\begin{align}
\Rightarrow \;\; &
\begin{cases} 
 (\al_1p-\al_2)r+(\al_1p-\al_2)\varrho &= \ \al_1p+\al_2 \\
-(\al_2p+\al_1)r+(\al_2p+\al_1)\varrho &= \ \al_1-\al_2p \\
 (\bt_1p-\bt_2)r+(\bt_1p-\bt_2)\varrho &= \ \bt_1p+\bt_2 \\
-(\bt_2p+\bt_1)r+(\bt_2p+\bt_1)\varrho &= \ \bt_1-\bt_2p
\end{cases}\label{E:sysR&Rho} \\
& \quad \text{\em has a solution in} \; \RR\setminus((\{0\}\times\rea)\cup \{(1,0)\}) \notag \\
& \quad \text{\em for some $(A,B)\in \CC$ such that $A\ba{B}\notin \rea$.} \notag
\end{align}
Let us restrict ourselves to $p=q \neq 0$. In this case, if $(\al_1p-\al_2)=0,$ then the consistency of the
above system of equations forces on us:
\[
 (\al_1p-\al_2)=0 ~~\text{and}~~(\al_1p+\al_2)=0.
\]
That implies $\al_1+i\al_2=0$, which contradicts the invertibility of $T$. Thus $\al_1p-\al_2 \neq 0$.
Similarly, all the
coefficients of the left hand side of the above system of equations \eqref{E:sysR&Rho} are non-zero. Thus,
$T$ having the mapping properties given in \eqref{E:order1} exists
\begin{align}
\Rightarrow \;\; &
\begin{cases}
 \dfrac{\al_1p+\al_2}{\al_1p-\al_2} &= \ \dfrac{\bt_1q+\bt_2}{\bt_1q-\bt_2}, \\
 {} & {} \\
 \dfrac{\al_1-\al_2p}{\al_1+\al_2p} &= \ \dfrac{\bt_1-\bt_2q}{\bt_1+\bt_2q}
\end{cases} \notag\\
\Rightarrow \;\; &
\begin{cases}
 \dfrac{\al_2}{\al_1p-\al_2} &= \ \dfrac{\bt_2}{\bt_1q-\bt_2}, \\
 {} & {} \\
 \dfrac{\al_1}{\al_1+\al_2p} &= \ \dfrac{\bt_1}{\bt_1+\bt_2q}
\end{cases} \notag \\
\Rightarrow \;\; &
\begin{cases}
 \al_1\bt_2-\bt_1\al_2 & \ =0, \\
 \al_2\bt_1-\bt_2\al_1 & \ =0 \;\; (\text{since}\; p=q \neq 0).
\end{cases} \notag
\end{align}
But the second condition implies that $\imag(A \ba{B})=0$, i.e. $A\ba{B} \in \rea$, which is a contradiction.
Thus there is no $T$ with the mapping properties given in \eqref{E:order1}.
\smallskip

Under the assumption $p=q$, we still need to show that there is no invertible $\cplx$-linear map
that maps $P_1\cup P_2$ to the union of the two graphs given in \eqref{E:order1}, but with the images swapped.
Ruling this out is a shorter argument. In this case, $T$ will have the matrix representation given by
\eqref{E:mat2}. Hence, $T$ having the desired properties exists (this time, we implicitly view the necessary
conditions as a linear system with $\al_1$, $\al_2$, $\bt_1$, $\bt_2$ as unknowns)
\begin{align}
 \Rightarrow \;\; &
 \begin{cases}
        p(r+\varrho-1)\al_1+(r-\varrho+1)\al_2 &= \ 0 \\
        (r+\varrho+1)\al_1+p(\varrho-r+1)\al_2 &= \ 0 \\
        p(r+\varrho-1)\bt_1+(r-\varrho+1)\bt_2 &= \ 0 \\
        (r+\varrho+1)\bt_1+p(\varrho-r+1)\bt_2 &= \ 0
        \end{cases} \notag \\
 & \quad \text{\em has a solution in $\rea^4$ such that $\al_1\bt_2-\bt_1\al_2\neq 0$} \notag \\
 & \quad \text{\em for some $(r,\varrho)\in \RR\setminus ((\{0\}\times\rea)\cup \{(1,0)\})$.} \notag
\end{align}
Hence, every $(\al_1,\al_2)$ and  $(\bt_1,\bt_2)$ such that $(\al_1,\al_2,\bt_1,\bt_2)$ is a solution
of the above system will be solutions in $\RR$ of the
following system of equations
\begin{align}
 p(r+\varrho-1)X+(r-\varrho+1)Y & =0 \notag\\
(r+\varrho+1)X+p(\varrho-r+1)Y & =0 .\notag
\end{align}
For the matrix $M_T$ in \eqref{E:mat2} to be invertible, we need $\{(\al_1,\al_2),(\bt_1,\bt_2)\}$ to be linearly
independent in $\RR$. The only way we can get $\{(\al_1,\al_2),(\bt_1,\bt_2)\}$ to be linearly independent is for each
coefficient of the above system of equations to vanish. This gives $r=0$, which is a contradiction. Hence a
$T$ with the matrix representation \eqref{E:mat2} having the other desired properties cannot exist.
\smallskip

It follows from the work of Weinstock \cite{Wk} (refer to the last paragraph of Section~\ref{S:technical} for a precise
statement) that, by our choice of $(s,t)$ and $(\sm,\tau)$, $P_1\cup P_2$ is locally polynomially convex.
To conclude: it can easily be checked that the transformations $S_p$ determined (with respect to the standard
basis) by the matrices
\[
 \begin{pmatrix}
   p+i & 0 \\
   0 & p+i
  \end{pmatrix}, \;\; p\in \rea\setminus\{0\},
\]
give us the 1-parameter family $\{S_p:p\in \rea\setminus\{0\}\}$ having all the properties
stated in Observation~\ref{O:counterEx}.
\medskip

\subsection{A discussion on the correctness of Result~\ref{R:dieuWrong}}\label{SS:counterDieu}
Another issue that --- as we discussed in Section \ref{S:intro} --- needs to be settled is the status of 
Result \ref{R:dieuWrong}. We address this now. First, we shall show the following:
There exist $\lm \in \rea \setminus \{0,-1\}$ and a real-valued function $\varphi \in \smoo^1(\{0\})$ such that,
 $S_1\cup S_2$ in \ref{R:dieuWrong} {\em is not polynomially convex at} $(0,0)$,  and yet
$\{z \in \cplx: \rl{z}=t/2,~~2 \lm \rl{z}+\varphi(z)= 0\}$ {\em has more than one connected components} for all
$t>0$. The proof goes as follows:
\smallskip

Let $\varphi(z)= (\imag{z})^2= y^2$ (writing $z=x+iy$), and consider the polynomial $p(z,w)=z+w.$ The polynomial $p$ is
 real valued when it is restricted to $S_1\cup S_2.$ Let $V_t= p^{-1}\{t\}$. Now let us compute $V_t \cap S_j,~~j=1,2.$
 We have
\begin{align}
V_t\cap S_1&= \{(z,\zbar ):  \rl{z}=t/2\},\notag\\
V_t\cap S_2&= \{(z,\zbar+\ba{\lm}z+\lm \zbar ): 2 \rl{z}+2 \lm \rl{z}+(\imag{z})^2=t\}.\notag
\end{align}
Let $\pi_1$ denote the projection onto the first coordinate. Then, the above are curves in $\CC$ that project 
down to:
\begin{align}
 \pi_1(V_t\cap S_1)&= \{(x,y)\in \RR~:~ x=t/2\},\notag\\
\pi_1(V_t\cap S_2)&= \{(x,y)\in \RR~:~ 2x+2 \lm x+y^2=t\}. \notag\\
&= \left\lbrace(x,y)\RR~:~ x-\frac{t}{2(1+\lm)} = - \frac{y^2}{2(1+\lm)}\right\rbrace. \notag
\end{align}
Let us now choose $\lm~:~-1<\lm<0,$ and fix it. For $t>0$, we see that:
\begin{itemize}
 \item [(1)] $\pi_1(V_t\cap S_1) \cap \pi_1(V_t\cap S_2)$ consists of the two points $(t/2, \pm \sqrt{t|\lm|})$; and
\item[(2)] $\cplx \setminus \pi_1(V_t\cap S_1) \cup \pi_1(V_t\cap S_2)$ contains a bounded component, 
say $\mathscr{D}_t,$ and $\mathscr{D}_t \rightarrow \{0\}$ as $0<t\searrow 0.$
\end{itemize}

Let us now write $\pi_1(V_t\cap S_1) \cap \pi_1(V_t\cap S_2)= \{\zt_1(t),\zt_2(t)\},~ t>0.$ Note:
\begin{equation} \nonumber
 \pi_1^{-1}\{\zt_j(t)\}\cap S_1=\{(\zt_j(t),t-\zt_j(t))\}= \pi_1^{-1}\{\zt_j(t)\}\cap S_2,~~j=1,2,~ t>0.
\end{equation}
From this and (2), we conclude that $V_t\cap(M_1\cup M_2)$ determines a closed curve $C_t$ such
that
\begin{equation}\nonumber
 \pi_1(C_t)=\bdy{\mathscr{D}_t},~~\forall t~:~0<t\searrow 0.
\end{equation}
Hence, we get a family of analytic discs $\psi_t: \mathscr{D}_t \longrightarrow \CC$ by $z\longmapsto (z,t-z),$
attached to $S_1\cup S_2$ and $\psi_t \rightarrow 0$ as $t\searrow 0.$ Hence, by maximum modulus theorem, 
$S_1\cup S_2$ is not locally polynomially convex at the origin. Yet, owing to (1),
 $$ \left\lbrace z \in \cplx: \rl{z}=\frac{t}{2},~~2 \lm \rl{z}+(\imag(z))^2= 0\right\rbrace$$ has two connected 
components for all $t>0.$
\smallskip

This shows that condition $[(i)~~ \text{AND}~~ (ii)]$ in Result \ref{R:dieuWrong} is not always necessary
 for $S_1 \cup S_2$ to not be locally polynomially convex at $(0,0) \in \CC$. 
\smallskip

Now we shall show that there exists $\lm \in \rea \setminus \{0,-1\}$ and a real-valued function 
$\varphi \in \smoo^1(\{0\})$ such that the condition (ii) is satisfied and yet 
$S_1 \cup S_2$ in  Result \ref{R:dieuWrong} is locally polynomially convex at $(0,0) \in \CC$. 
Let us consider the following surfaces in $\CC$:
\begin{align}
 S_1 &:= \{(z,w)\in \CC: w=\zbar \}, \notag \\
 S_2 &:= \{(z,w)\in D(0;\dl)\times \cplx: w=(1+\lambda)\zbar + \ba{\lambda}z + \varphi(z)\}, \notag
\end{align}
 where $\lm \in \rea \setminus \{0,-1\}$, $\dl>0$ and
\[
\varphi = \left. \Phi(\rl(\bcdot))\right|_{D(0;\dl)}, 
\]
where $\Phi \in \rea[x]$, i.e. a polynomial in $x:=\rl{z}$ with 
real coefficients, such that $\Phi(0)=0=\Phi'(0)$. Let us consider the polynomial $P(z,w)=z+w$. 
Now let us compute the set $P^{-1}\{t\}\cap S_j$ for $j=1,2$.
\begin{align}
 P^{-1}\{t\}\cap S_1 &= \{(z,\zbar)\in \CC : z+\zbar=t\} \notag\\
 &=\{(t/2,+iy,t/2-iy)\in \CC : y\in \rea\}\;\;\;(\text{writing}\; z=x+iy), \label{E:pinvintS1}
\end{align}
\begin{align}
 P^{-1}\{t\}\cap S_2 &= \{(z,w)\in D(0;\dl)\times \cplx : w=(1+\lambda)\zbar + \ba{\lambda}z 
+ \varphi(z),\; z+w=t\} \notag\\
&= \{(x+iy,x-iy+2\lm x+ \varphi(x))\in D(0;\dl)\times \cplx : 2(1+\lm)x+\Phi(x)=t\}. \label{E:pinvintS2}
\end{align}
Let $q_t(x)= 2(1+\lm)x+\Phi(x)-t$ and let $Z_\rea(q_t)$ denote the set of real zeros of the polynomial $q_t$. 
We have:
\begin{align}
& \left\lbrace z \in D(0;\dl): \rl{z}=\frac{t}{2},~~2 \lm \rl{z}+\Phi(z)= 0\right\rbrace \notag\\
&= \begin{cases}
    \varnothing,\;\;\text{if $t/2 \notin Z_\rea(q_t)$}, \\
    \{(t/2+iy): y\in \rea\}\cap D(0;\dl),& \text{if $t/2 \in Z_\rea(q_t)$}.
   \end{cases} \notag
\end{align}
This shows that if we fix $\dl>0$ to be sufficiently small then, because $Z_\rea(q_t)\cap (-\dl,\dl)$ is
at most a singleton, the set 
$\left\lbrace z \in \cplx: \rl{z}=\frac{t}{2},~~2 \lm \rl{z}+\varphi(z)= 0\right\rbrace$
has at most one component, for all $t \in \rea$ sufficiently small. Hence the condition (ii) 
of Result \ref{R:dieuWrong} holds.
\smallskip

From \eqref{E:pinvintS1} and \eqref{E:pinvintS2}, we have the following:
\begin{multline}
P^{-1}\{t\} \cap ((S_1 \cup S_2)\cap \ba{D(0;\eps)} \times \cplx) \\
= \{(t/2,+iy,t/2-iy)\in \ba{D(0;\eps)} \times \cplx : y\in \rea \}\\
 \cup \left( \cup_{x \in Z_\rea(q_t)\cap[\eps,\eps]}
 \{ (x+iy,-x-iy+t) : \sqrt{x^2+y^2}\leq \eps \} \right). \notag 
\end{multline}
These are lines segments in $\ba{D(0;\eps)} \times \cplx$ whose projections on $\cplx_z$ are line segments parallel 
to $y$-axis. Hence, $P^{-1}\{t\} \cap ((S_1 \cup S_2)\cap \ba{D(0;\eps)} \times \cplx)$ is union of finitely many 
non-intersecting line segments when $\eps \in (0,\dl)$. Hence 
$$(P^{-1}\{t\} \cap ((S_1 \cup S_2)\cap \ba{D(0;\eps)} \times \cplx )\hull{)}= P^{-1}\{t\} \cap ((S_1 \cup S_2)\cap 
\ba{D(0;\eps)} \times \cplx),$$
 for $\eps \in (0,\dl)$ and for all $t \in \rea$ sufficiently small. Hence, the pair $(S_1, S_2)$ 
satisfies the conditions (i) and (ii) in Result \ref{R:dieuWrong} and yet $S_1\cup S_2$ is locally 
polynomially convex at the origin in $\CC$. This last assertion follows from a very useful result ---
see Result \ref{R:S1} below--- for computing polynomial hulls.
\smallskip
 
This shows that condition $[(i)~~\text{ AND}~~ (ii)]$ in Result \ref{R:dieuWrong} is not sufficient for $S_1 \cup S_2$
to not be locally polynomially convex at $(0,0) \in \CC$.
\medskip

\section{Technical preliminaries}\label{S:technical}
We shall require a couple of preliminaries to set the stage for proving the above theorems. The principal
tool that we shall use is the following lemma by Kallin \cite{K}.
\begin{lemma}[Kallin]\label{L:kallin}
 Let $K$ and $L$ be two compact polynomially convex subsets in $\cplx ^n$. Suppose
 there exists a holomorphic polynomial $P$ satisfying the following
 conditions:
 \begin{enumerate}
 \item[$(i)$] $\widehat{P(K)} \cap \widehat{P(L)}\subseteq \{0\}$ and $0 \in \cplx \setminus int(\widehat{P(K)}
  \cup \widehat{P(L)})$; and
 \item[$(ii)$] $P^{-1}\{0\} \cap (K \cup L)$ is polynomially convex.
 \end{enumerate}
 Then $K\cup L$ is polynomially convex.
\end{lemma}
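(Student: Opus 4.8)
The plan is to prove the equivalent inclusion $\widehat{K\cup L}\subseteq K\cup L$; since $K$ and $L$ are individually polynomially convex, this is all that is needed. Write $A:=\widehat{P(K)}$ and $B:=\widehat{P(L)}$, both compact subsets of $\cplx$, and recall two facts I would use repeatedly: a compact subset of $\cplx$ is polynomially convex exactly when its complement is connected (so $A$ and $B$ are \emph{full}, with no bounded complementary components), and $P(\widehat X)\subseteq\widehat{P(X)}$ for every polynomial $P$ and compact $X$ (immediate from the definition of the hull). I would first settle the one–variable picture. By hypothesis $(i)$ the full compacta $A$ and $B$ meet only in the connected set $\{0\}$, so a Mayer--Vietoris (Alexander-duality) argument shows that $A\cup B$ again has connected complement, i.e. $A\cup B$ is polynomially convex in $\cplx$. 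Hence $\widehat{P(K\cup L)}=\widehat{P(K)\cup P(L)}=A\cup B$, and therefore $P(\widehat{K\cup L})\subseteq A\cup B$.

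Next I would fibre $\widehat{K\cup L}$ over $\cplx$ through $P$ and classify a point $x\in\widehat{K\cup L}$ by the value $\lambda:=P(x)\in A\cup B$. The cleanest case is $\lambda=0$, which is where hypothesis $(ii)$ is consumed. Here I would invoke the standard fibre lemma: for any compact $X$ and any value $c\notin\operatorname{int}\widehat{P(X)}$ one has $\widehat X\cap P^{-1}(c)=\widehat{X\cap P^{-1}(c)}$. The non-interiority clause of $(i)$ says precisely that $0\notin\operatorname{int}(A\cup B)=\operatorname{int}\widehat{P(K\cup L)}$, so with $X=K\cup L$ and $c=0$ this gives $\widehat{K\cup L}\cap P^{-1}(0)=\widehat{(K\cup L)\cap P^{-1}(0)}$, and by $(ii)$ the right-hand side equals $(K\cup L)\cap P^{-1}(0)$. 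Thus every $x$ lying over $0$ already belongs to $K\cup L$.

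It remains to treat $x$ with $\lambda=P(x)\neq0$. By $(i)$ exactly one of $\lambda\in A\setminus B$ or $\lambda\in B\setminus A$ occurs; by symmetry assume $\lambda\in A\setminus B$, and I would show $x\in\widehat K=K$. Since $\lambda$ lies outside the full set $B=\widehat{P(L)}$, Oka--Weil/Runge approximation supplies a one-variable polynomial $h$ with $h(\lambda)=1$ and $\|h\|_{B}\le\rho<1$; because $P(L)\subseteq B$, the function $G:=h\circ P$ is uniformly small on $L$ while $G(x)=1$. The mechanism is then to test $G$ and its powers against evaluation at $x$ and to push the mass of a Jensen (representing) measure for $x$—which may be taken on the Shilov boundary, contained in $K\cup L$—off $L$, so that $x$ is governed by $K$ alone and hence $x\in\widehat K=K$. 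Combining the three cases yields $\widehat{K\cup L}\subseteq K\cup L$.

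The genuine difficulty, I expect, lies not in the bookkeeping but in the two maximum-principle inputs. The subtler one is the localization in the case $\lambda\in A\setminus B$ when $\lambda$ is an \emph{interior} point of $A$: then no one-variable polynomial can simultaneously peak at $\lambda$ and be bounded by $1$ on $A$, so a naive power estimate diverges, and one must instead localize near the fibre $P^{-1}(\lambda)$—where $K\cup L$ coincides with $K$, since $\overline W\cap B=\varnothing$ for a small disc $W\ni\lambda$—and run Rossi's local maximum modulus principle (or the Jensen-measure argument on the Shilov boundary) to confine $x$ to $K$. The second input, the fibre identity over the non-interior value $0$, is where hypothesis $(i)$ is essential: non-interiority is exactly what permits approximating $1/(\zeta-\lambda_j)$ for a sequence $\lambda_j\to0$ lying outside $\widehat{P(K\cup L)}$, and thereby building polynomials that concentrate on $P^{-1}(0)$. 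These two analytic steps are where $(i)$ and $(ii)$ are truly used; everything else is formal.
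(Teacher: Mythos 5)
Two preliminary remarks. First, the paper itself offers no proof of this lemma: it is quoted as Kallin's result \cite{K} (a complete proof can be found in Stout's book \cite{S1}), so your attempt has to be judged against the standard argument rather than against anything internal to the paper. Second, the parts of your sketch that are fully specified are correct and follow that standard argument: writing $A:=\widehat{P(K)}$ and $B:=\widehat{P(L)}$, your duality/Janiszewski argument that $A\cup B$ is polynomially convex, hence $\widehat{P(K\cup L)}=A\cup B$ and $P(\widehat{K\cup L})\subseteq A\cup B$, is right; the fibre lemma you invoke (fibres of the hull over values not interior to $\widehat{P(X)}$) is indeed a quotable, known result; and combining it with hypothesis $(ii)$ disposes of every $x\in\widehat{K\cup L}$ with $P(x)=0$.

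The genuine gap is the case you yourself single out, $\lambda=P(x)\in\operatorname{int}(A)\setminus\{0\}$, and the repair you propose there does not work. If $E:=\widehat{K\cup L}\cap P^{-1}(\overline W)$ for a small disc $W\ni\lambda$ with $\overline W\cap B=\varnothing$, then Rossi's local maximum modulus principle gives, for every polynomial $q$,
\[
|q(x)|\;\le\;\sup_{\partial E\,\cup\,(E\cap\Gamma)}|q|,
\]
where $\Gamma\subseteq K\cup L$ is the Shilov boundary and $\partial E$ is the boundary of $E$ relative to $\widehat{K\cup L}$. But $\partial E$ consists of hull points lying over the circle $\partial W$ --- precisely the points you have not yet placed in $K\cup L$ --- and this term cannot be suppressed by any device of the form ``multiply by powers of $h\circ P$,'' because the maximum principle forbids $|h(\lambda)|>\sup_{\partial W}|h|$; so the small-disc localization is circular. (For the same reason your primary mechanism fails: the Jensen inequality $0\le\mu(K)\log\|h\|_{A}+\mu(L)\log\rho$ is homogeneous in the exponent, so taking powers of $G=h\circ P$ gains nothing when $\|h\|_A>1$, which is unavoidable for interior $\lambda$.) The correct localization is over the complement of $B$, not over a disc: take $U:=\widehat{K\cup L}\cap P^{-1}(\cplx\setminus B)$, an open subset of the hull containing $x$. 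Since $P(\overline U)\subseteq\overline{A\setminus B}\subseteq A$ and $A\cap B\subseteq\{0\}$, the relative boundary $\partial U$ lies over $\{0\}$, i.e.\ inside the fibre already identified, via the fibre lemma and $(ii)$, with $(K\cup L)\cap P^{-1}(0)$; moreover $\overline U\cap\Gamma\subseteq K\cup(L\cap P^{-1}(0))$. Rossi's principle now yields $x\in\widehat{K\cup(L\cap P^{-1}(0))}$. The finish is then not a power estimate but a logarithmic-singularity argument: if $\nu$ is a Jensen measure for $x$ supported on $K\cup(L\cap P^{-1}(0))$, then $-\infty<\log|P(x)|\le\int\log|P|\,d\nu$ while $\log|P|\equiv-\infty$ on $L\cap P^{-1}(0)$, forcing $\nu\bigl((L\cap P^{-1}(0))\setminus K\bigr)=0$; hence $|q(x)|\le\sup_K|q|$ for every polynomial $q$, i.e.\ $x\in\widehat K=K$. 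In short: you named the right tools (Rossi plus Jensen measures) and located the difficulty correctly, but as proposed the decisive step fails; it closes only after re-aiming the localization so that its boundary falls into the fibre over $0$, which is the one fibre your hypotheses control.
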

The other tool we shall use in the course of the proof of some of the above theorems is the following theorem
from Stout's book \cite[Theorem 1.2.16]{S1}.
\begin{result}\label{R:S1}
 If $X\subset \cplx^n$ is compact and if $\mathscr{P}(X)$ contains a real valued function, say f, then $X$ is 
polynomially convex if and only if each fiber $f^{-1}\{t\}\cap X$, $t \in \rea$, is polynomially convex. If 
$X$ is polynomially convex, then $\mathscr{P}(X)=\smoo(X)$ if and only if for each $t$, 
$\mathscr{P}(f^{-1}\{t\}\cap X)=\smoo(f^{-1}\{t\}\cap X)$.
\end{result}
\noindent Here, $\mathscr{P}(X)$ denotes the uniform algebra generated by all holomorphic polynomials restricted to $X$.

Let $S_{1}$ and $S_{2}$ be two totally-real surfaces in $\cplx^2$ passing through the origin.  Their
tangent spaces at the origin are also totally real. If $T_0 S_1 \cap T_0 S_2=\{0\}$, then there exist
global holomorphic coordinates $(z,w)$ with respect to which  $T_{0}S_{1}= \mathbb{R}^2$ and
$T_{0}S_{2}\,=\, M(A)$ for some real matrix $A$, where $A+iI$ is invertible and $M(A):=\,(A+iI)\mathbb{R}^2$.
Here $\RR:= \{(z,w):~ \imag{(z)}=0=\imag{(w)}\}$. The reader is referred to Weinstock's paper \cite{Wk} for details.
\smallskip

\indent Near the origin, $S_{1}$ and $S_{2}$ will be small perturbations of $\mathbb{R}^2$ and $M(A)$ respectively.
Define $S_j(\dl):= S_j \cap \overline{B(0;\dl)},~~~j=1,2$. For sufficiently small $\dl >0$, we have
\begin{align}
S_{1}(\delta)&= \{ (x+ f_{1}(x,y), y+ f_{2}(x,y)) : x,y \in \mathbb{R} \} \cap \overline{B(0; \delta)} \notag \\
 S_{2}(\delta)&= \{(A+iI)(x,y)+(g_{1}(x,y), g_{2}(x,y)) : x,y \in \mathbb{R} \} \cap \overline{B(0;\delta)}, \notag
\end{align}
where $f_j,g_j = o(||(x,y)||)$ as $(x,y)\rightarrow 0$ are $\cplx$-valued functions, $j=1,2$.
\smallskip

 Since $T_{0}S_{1}\cup T_{0}S_{2}$ is locally polynomially convex at origin, it satisfies Weinstock's criterion \cite{Wk},
 i.e. $A$ has no purely imaginary eigenvalue of modulus greater than 1. It is easy to show that the image of $M(A)\cup \RR$
 under a $\cplx$-linear transformation represented by a {\em real} nonsingular matrix $S$ is $M(SAS^{-1}) \cup \RR$, whence
 this transformation maps $S_{1}(\dl)\cup S_{2}(\dl)$ to $\widetilde{S_{1}}(\dl)\cup \widetilde{S_{2}}(\dl)$, where
\begin{align}
\widetilde{ S_1}(\delta)&=\{ (x+  \widetilde{f_{1}}(x,y),y+ \widetilde{f_{2}}(x,y)): x,y \in \mathbb{R} \}
\cap S(\ba{B(0;\delta)})\notag \\
\widetilde{S_{2}}(\delta)&=\{(SAS^{-1}+iI)(x,y)+(\widetilde{g_{1}}(x,y),\widetilde{g_{2}}(x,y)) : x,y \in \mathbb{R}\}
\cap S(\ba{B(0;\delta)}), \notag
\end{align}
where $\wtil{f_j}, \wtil{g_j}$ have the same properties as $f_j, g_j$ given above, $j=1,2$.
\medskip

\section{The proof of Theorem~\ref{T:transversePCVX}}\label{S:proof-transversePCVX}
Let $S_{1}$ and $S_{2}$ be two totally-real surfaces intersecting only at the origin and $T_0S_1 \cap 
T_0S_2=\{0\}$.
Let
\begin{align}
S_{1}(\delta)&= \{ (x+ f_{1}(x,y), y+ f_{2}(x,y)) : x,y \in \mathbb{R} \} \cap \overline{B(0; \delta)}\notag \\
S_{2}(\delta)&= \{(A+iI)(x,y)+(g_{1}(x,y), g_{2}(x,y)) : x,y \in \mathbb{R} \} \cap \overline{B(0;\delta)}, \notag
\end{align}
where $f_j, g_j$ are as described in Section \ref{S:technical}. Now, it is a fact of basic linear algebra that every real
$ 2 \times 2$ matrix is similar via a {\em real} nonsingular matrix, to one of the following three kinds of matrices:
a diagonal matrix with real entries, a matrix of the form 
$ \begin{pmatrix}
 \lambda & 1 \\
 0 & \lambda
\end{pmatrix} $ or of the form 
$\begin{pmatrix}
 s &-t\\
 t & s
 \end{pmatrix}$ 
 where $\lambda, s,t \in \mathbb{R}$. Given this fact, and the argument in the last paragraph of of Section
 \ref{S:technical}, the proof of Theorem \ref{T:transversePCVX} reduces to the following two lemmas.
This is because it is sufficient to take the matrix $A$ to be one of the above form.
\begin{lemma}\label{L:realEig}
If $A\,=\,\begin{pmatrix}
 \lambda & 1 \\
 0 & \lambda
\end{pmatrix}$,
where $\lambda \in \mathbb{R}$ or $A$ is a diagonal matrix with real entries, then $S_{1} \cup S_{2}$ is locally 
polynomially convex at origin.
\end{lemma}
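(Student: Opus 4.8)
The plan is to apply Kallin's Lemma (Lemma~\ref{L:kallin}) to the compact pieces $K := S_1(\dl)$ and $L := S_2(\dl)$ for $\dl>0$ small, using a single quadratic Kallin polynomial tailored to the union of the tangent planes $\RR \cup M(A)$. Two preliminary reductions set the stage. First, since $A$ has only real eigenvalues in both cases of the lemma, it has no purely imaginary eigenvalue at all, so Weinstock's criterion \cite{Wk} holds with room to spare; this is the structural input that makes the separation below possible. Second, each $S_j(\dl)$ is a small $\smoo^2$-piece of a totally-real surface and is therefore polynomially convex for $\dl$ small, so the hypotheses ``$K,L$ polynomially convex'' of Lemma~\ref{L:kallin} are met.

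For the core construction I would take $P(z,w) = a z^2 + b w^2$ and choose $a,b\in\cplx^*$ so that, on the tangent planes, the two image sets lie in disjoint closed sectors with common vertex $0$. On $\RR$ one has $P = a x^2 + b y^2$, whose image is the truncated convex cone spanned by $a$ and $b$. In the diagonal case $A=\mathrm{diag}(\lm_1,\lm_2)$ a point of $M(A)$ is $((\lm_1+i)x,(\lm_2+i)y)$, so $P|_{M(A)} = a(\lm_1+i)^2 x^2 + b(\lm_2+i)^2 y^2$ spans the cone generated by $a(\lm_1+i)^2$ and $b(\lm_2+i)^2$. Writing $\mu_j=\arg(\lm_j+i)\in(0,\pi)$ --- a range forced precisely by the reality of the eigenvalues --- the four bounding rays have arguments $\arg a,\ \arg b,\ \arg a+2\mu_1,\ \arg b+2\mu_2$, and a short case analysis on the signs of $\lm_1,\lm_2$ produces a choice of $\arg a,\arg b$ for which the first pair and the second pair bound disjoint cones, while simultaneously $P$ vanishes on each plane only at the origin (the latter being the requirement that no two bounding rays of a single cone be antipodal). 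For the Jordan block $A=\left(\begin{smallmatrix}\lm & 1\\ 0 & \lm\end{smallmatrix}\right)$ I would first conjugate by $\mathrm{diag}(s,1)$ --- a real similarity, hence harmless by the last paragraph of Section~\ref{S:technical} --- to replace the off-diagonal entry by a small $s>0$; then $M(A)$ becomes a small perturbation of a decoupled plane and the same cone analysis applies, the $O(s)$ cross term being absorbed into the angular gap.

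Granting such an $(a,b)$, Kallin's two conditions for the tangent planes are immediate: $P$ maps each plane into a closed convex cone, each such cone is its own polynomially convex hull, the two cones meet only at $0$, their union has angular width $<2\pi$ so that $0$ is not interior to it (condition~(i)), and $P^{-1}\{0\}$ meets $\RR\cup M(A)$ only in $\{0\}$ (condition~(ii)). To pass from the planes to the surfaces I would use $f_j,g_j=o(\|(x,y)\|)$: on $S_j(\dl)$ one gets $P=(\text{leading quadratic})+o(r^2)$ with $r=\|(x,y)\|$, so for $\dl$ small the images remain inside slightly enlarged sectors that are still disjoint, and $P$ remains nonzero off the origin; hence (i) and (ii) persist for $K\cup L$, and Lemma~\ref{L:kallin} yields that $S_1(\dl)\cup S_2(\dl)$ is polynomially convex.

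The main obstacle I anticipate is the \emph{simultaneous} fulfilment of both Kallin conditions in the construction of $(a,b)$, rather than their verification afterwards. Disjointness of the two cones is an open condition on $(\arg a,\arg b)$, but the nonvanishing requirement excludes certain antipodal configurations, and in the symmetric arrangements --- most visibly $\lm_1=-\lm_2$ in the diagonal case, where $(\lm_1+i)^2$ and $(\lm_2+i)^2$ become negatives of one another --- the naive choice $\arg a\approx\arg b$ creates exactly such a cancellation and must be avoided. Checking that a valid $(a,b)$ nonetheless exists in every sign pattern, including the nilpotent Jordan case $\lm=0$, is the delicate part; the accompanying perturbation estimate is then routine, provided the sectors are selected with a fixed angular gap so that a single small $\dl$ works uniformly.
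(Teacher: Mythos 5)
Your strategy is in essence the paper's own strategy: apply Kallin's Lemma to $S_1(\dl)\cup S_2(\dl)$ with a quadratic polynomial whose leading (tangent-plane) part maps $\RR$ and $M(A)$ into disjoint salient cones, then absorb the $o(\|(x,y)\|^2)$ errors by shrinking $\dl$; your preliminary reductions (real similarity, polynomial convexity of small totally-real pieces) are also the paper's. The problem is that your write-up stops exactly at the step on which the whole lemma hinges: you assert that ``a short case analysis on the signs of $\lm_1,\lm_2$'' produces an admissible pair $(a,b)$, and you yourself flag the verification --- in every sign pattern, including $\lm_1=-\lm_2$ and the nilpotent case --- as the delicate part, without carrying it out. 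As written, that is a genuine gap: the existence of a separating polynomial in every configuration \emph{is} the content of the lemma, and nothing in your text establishes it.

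The gap is fillable, and the paper fills it with no case analysis at all: take $P(z)=\langle (A-iI)z,z\rangle$, where $\langle z,w\rangle := z_1w_1+z_2w_2$; in the diagonal case this is precisely your $az_1^2+bz_2^2$ with $a=\lm_1-i$, $b=\lm_2-i$. On $S_1(\dl)$ this gives $\imag P=-(x^2+y^2)+o(\|(x,y)\|^2)$, while on $S_2(\dl)$ it gives $\imag P=\langle (A^2+I)(x,y),(x,y)\rangle+o(\|(x,y)\|^2)$, which is positive definite for both normal forms at hand: it equals $(\lm_1^2+1)x^2+(\lm_2^2+1)y^2$ in the diagonal case and $\lm^2x^2+y^2+(x+\lm y)^2$ for the Jordan block. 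Hence the two images lie in closed salient sectors contained (apart from the origin) in the open lower and upper half-planes, the antipodal degeneration you worry about never arises (both $a$ and $b$ lie in the open lower half-plane, whatever the signs of the eigenvalues), and $P^{-1}\{0\}\cap(S_1(\dl)\cup S_2(\dl))=\{0\}$. This uniform choice also makes your conjugation by $\mathrm{diag}(s,1)$ for the Jordan block unnecessary: the cross term is handled head-on rather than shrunk and absorbed into an angular gap. If you replace your unproved existence claim by this explicit choice, your argument closes and becomes essentially identical to the paper's.
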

\begin{proof}
We shall show that, shrinking $\delta$ if necessary, $S_{1}(\delta)\cup S_{2}(\delta)$
is polynomially convex. Consider the polynomial \[ P(z)\,=\,\langle(A-iI)z, z\rangle \]
where $\langle z,w \rangle :=\, z_{1}w_{1}+ z_{2}w_{2}$. We will first consider the case when $A$ is a Jordan block. 
\begin{align}
 P(x+ f_{1}(x,y), y+ f_{2}(x,y))\,
 &= \langle((\lambda-i)x+y, (\lambda-i)y), (x,y)\rangle + H( x,y)\notag \\
 &= ((\lambda-i)x+y)x + (\lambda-i)y^2 + H(  x,y) \notag \\
 &= (\lambda-i)x^2 + xy + (\lambda -i) y^2 +  H(  x,y) \notag \\
 &= (\lambda x^2 + xy + \lambda y^2) - i(x^2 +y^2) + H( x,y), \notag
\end{align}
  where $H(x,y)= o(||(x,y)||^2)$ as $(x,y)\rightarrow 0$.
  \smallskip

Since $\lim_{(x,y)\rightarrow 0} H(x,y)/||(x,y)||^2 =0$, taking $\dl>0$ sufficiently small,  
\[ \imag(P(z)) < 0 \,\,\, \forall z \in S_{1}(\delta) \setminus \{0\}\] and equal to zero only when $z=0$.
\smallskip

Now, for $z \in S_2(\dl)$
\begin{align}
P(z) &= P((\lambda + i)x+y+ g_{1}(x,y), (\lambda +i)y +g_{2}(x,y)) \notag \\
&= \langle((\lambda ^2 +1)x +2 \lambda y , (\lambda ^2 + 1)y), ((\lambda +i)x + y , (\lambda +i)y)\rangle + o(||(x,y)||^2) \notag \\
&= [(\lambda +i)x + y][(\lambda ^2 + 1)x + 2 \lambda y] + (\lambda ^2 + 1)(\lambda +i) y^2 + o(||(x,y)||^2) \notag \\
&= [(\lambda ^2 + 1)\lambda x^2 + (2 \lambda ^2  + (\lambda ^2 + 1)) xy + (2 \lambda + \lambda ^3)y^2] \notag \\
&\quad + i[(\lambda ^2 + 1)x^2 + 2 \lambda xy + (\lambda ^2 + 1)y^2 ]+o(||(x,y)||^2). \notag
\end{align}
 Here
\begin{align}
  \imag(P(z)) &= (\lambda ^2 + 1)x^2 + 2 \lambda xy + (\lambda ^2 + 1)y^2 +o(||(x,y)||^2)\notag \\
  &= \lambda ^2 x^2 + y^2 + (x+ \lambda y)^2 + o(||(x,y)||^2).\notag
\end{align}
  So, shrinking $\dl>0$ if necessary,
 \[ 
\imag(P(z)) > 0 \,\,\,\, \forall z \in S_{2}(\delta) \setminus \{0\}
 \]
 and equal to zero only when $z=0$.
\smallskip

We can now show that $P^{-1}\{0\}\cap (S_{1}(\delta) \cup S_{2}(\delta))$ is polynomially convex.
Observe that $P^{-1}\{0\}\cap (S_{1}(\delta) \cup S_{2}(\delta))\,=\, \{(0,0)\}$, hence polynomially convex.
 By Lemma \ref{L:kallin} $S_{1}(\delta)\cup S_{2}(\delta)$ is polynomially convex.
 The same proof goes through with $P$ defined exactly as above (and with considerably simpler calculations) when 
$A$ is a diagonal matrix with real entries.
\end{proof}
We now have to consider the case when $A\,=\,
 \begin{pmatrix}
 s &-t\\
 t & s
 \end{pmatrix}$. Recall that, by hypothesis, $M(A)\cup \RR$ is locally polynomially convex at $0\in \CC$.
By Weinstock's criterion \cite{Wk}, $t\in \rea$ will satisfy $|t|<1$ whenever $s=0$. It is this requirement that shapes 
our next lemma.

\begin{lemma}\label{L:cplxEig}
 If $A\,=\,
 \begin{pmatrix}
 s &-t\\
 t & s
 \end{pmatrix}$ and $|t|<1$ whenever $s\,=\,0$, then $S_{1}\cup S_{2}$ is locally polynomially convex at origin.
\end{lemma}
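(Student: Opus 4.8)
The plan is to imitate the Kallin-polynomial argument of Lemma~\ref{L:realEig}, using the very same polynomial $P(z)=\langle(A-iI)z,z\rangle$, but to replace the crude ``the imaginary part alone separates the two surfaces'' step by a \emph{rotated} version of $P$. First I would record the two leading symbols. On $S_1(\dl)$, whose leading parametrization is $u=(x,y)$, the symmetric part of $A$ is $sI$, so $P=\langle(A-iI)u,u\rangle=(s-i)(x^2+y^2)+o(\|u\|^2)$. On $S_2(\dl)$, whose leading parametrization is $(A+iI)u$, the identity $(A-iI)(A+iI)=A^2+I$ gives $P=\langle(A^2+I)u,(A+iI)u\rangle$; splitting into real and imaginary parts and using that the symmetric part of $A^2+I$ is $(s^2-t^2+1)I$, a short calculation yields $P=\bigl(s(s^2+t^2+1)+i(s^2-t^2+1)\bigr)(x^2+y^2)+o(\|u\|^2)$. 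Write $c_1:=s-i$ and $c_2:=s(s^2+t^2+1)+i(s^2-t^2+1)$ for the two leading coefficients.

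The conceptual point --- and the reason this case genuinely differs from Lemma~\ref{L:realEig} --- is that $\imag(P)$ by itself no longer separates $S_1$ from $S_2$: on $S_2$ its leading coefficient is $s^2-t^2+1$, which may be negative, and $\rl(P)$ fails equally, since its leading coefficients $s$ and $s(s^2+t^2+1)$ share the sign of $s$. Instead I would show that $c_1,c_2\neq 0$ and that $c_2$ is never a positive real multiple of $c_1$. Indeed $c_1=s-i\neq 0$ always, while $c_2=0$ would force $s=0$ and $|t|=1$; and if $c_2=\rho c_1$ with $\rho>0$, then comparing imaginary parts gives $\rho=t^2-s^2-1$ while comparing real parts (when $s\neq0$) gives $2s^2+2=0$, which is impossible, so such a $\rho>0$ can occur only when $s=0$ and $|t|>1$. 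Both degenerate possibilities are excluded \emph{precisely} by the hypothesis that $|t|<1$ whenever $s=0$; this is where Weinstock's constraint enters, and where it is genuinely needed.

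Having ruled these out, the rays $\rea_{\geq0}c_1$ and $\rea_{\geq0}c_2$ do not coincide, so there is a $\theta\in\rea$ with $\imag(e^{i\theta}c_1)<0<\imag(e^{i\theta}c_2)$ (rotate so that a line through the origin separating the two rays becomes the real axis). Then for $\dl>0$ sufficiently small the holomorphic polynomial $Q:=e^{i\theta}P$ satisfies $\imag(Q)<0$ on $S_1(\dl)\setminus\{0\}$ and $\imag(Q)>0$ on $S_2(\dl)\setminus\{0\}$ --- exactly the configuration exploited in Lemma~\ref{L:realEig}. Thus $Q(S_1(\dl))$ lies in the closed lower half-plane and $Q(S_2(\dl))$ in the closed upper half-plane, each meeting the real axis only at $Q(0)=0$; their polynomial hulls therefore meet only at $0$, the origin is not interior to their union, and $Q^{-1}\{0\}\cap(S_1(\dl)\cup S_2(\dl))=P^{-1}\{0\}\cap(S_1(\dl)\cup S_2(\dl))=\{0\}$ is polynomially convex. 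Kallin's Lemma (Lemma~\ref{L:kallin}) then yields that $S_1(\dl)\cup S_2(\dl)$ is polynomially convex.

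I expect the main obstacle to be twofold. The first is carrying out the algebra for $c_2$ correctly: the off-diagonal $\pm2st$ entries must cancel in passing to symmetric parts, which is exactly what makes both leading symbols scalar multiples of $x^2+y^2$ and permits the ray comparison. The second is arguing the hull hypothesis of Kallin's Lemma cleanly, namely that a small quadratic image patch lying in a closed half-plane and touching the real axis only at the origin has polynomial hull meeting the real axis only at the origin; this is the same harmonic-majorant argument ($\imag$ is harmonic, so its maximum over the hull is attained on the set) that tacitly underlies the half-plane separation in Lemma~\ref{L:realEig}, and I would invoke it in the same spirit.
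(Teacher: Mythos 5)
Your proposal is correct and is essentially the paper's own argument: since (as you note) the antisymmetric part of $A$ drops out of the quadratic form, your polynomial $P(z)=\langle(A-iI)z,z\rangle$ equals $(s-i)(z_1^2+z_2^2)$, i.e.\ a nonzero constant multiple of the polynomial $F(z_1,z_2)=z_1^2+z_2^2$ that the paper uses, and your leading coefficients $c_1$, $c_2$ are exactly $(s-i)$ times the paper's leading coefficients $1$ and $(s^2+t^2-1)+2si$. The only difference is presentational: you obtain the separation uniformly by rotating with $e^{i\theta}$ to get upper/lower half-plane containment, whereas the paper splits into the cases $s^2+t^2<1$ (real-part separation) and $s^2+t^2\geq 1$, $s\neq 0$ (thin angular sectors around two distinct rays), with the hypothesis that $|t|<1$ whenever $s=0$ playing the identical role in both write-ups.
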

\begin{proof}
  As before
\begin{align}
  S_{1}(\delta)&= \{ (x+ f_{1}(x,y),y+f_{2}(x,y)): ||(x,y)||\leq \delta\} \notag\\
  S_{2}(\delta)&= \{( (s+i)x-ty+ g_{1}(x,y),tx+(s+i)y+g_{2}(x,y)): ||(x,y)||\leq \delta \}. \notag
 \end{align}
Consider the polynomial
  \[F(z_{1},z_{2})\,=\,z_{1}^{2} + z_{2}^{2}.\]
 So,
 \[F(x+ f_{1}(x,y),y+f_{2}(x,y)\,=\, x^2 +y^2 + H_1(x,y),\]
 where $H_1(x,y)= o(||(x,y)||^2)$ as $ (x,y) \rightarrow 0$.
\begin{multline}
 F((s+i)x-ty+ g_{1}(x,y),tx+(s+i)y+g_{2}(x,y))\\
=(s^2+t^2-1)x^2+ (s^2+t^2-1)y^2 +2si(x^2+y^2)+ H_{2}(x,y), \notag
\end{multline}
  where $H_{2}(x,y)= o(||(x,y)||^2)$ as $(x,y)\rightarrow 0$.
  \smallskip

 Hence, for $\dl>0$ sufficiently small, 
\[\rl(F(z))\geq 0 \,\,\,\forall z \in S_{1}(\delta)\] and equal to zero only when $z=0$.
 \medskip

 \noindent{\bf {\em Case I}.} {\em When $(s^2+t^2)<1$}.
 \smallskip

 \noindent{ Clearly, after shrinking $\dl>0$ if necessary, $\rl(F(z))\leq 0 \; \forall \; z\in S_{2}(\delta)$ 
 and equal to zero only when $z=0$.}
 \medskip

 \noindent{\bf {\em Case II}.} {\em When $s^2+t^2 \geq 1$}.
 \smallskip

 \noindent First note that, by hypothesis, $s \neq 0$ in this case. 
We fix an $\eps>0$ sufficiently small, whose precise value will be specified later. Then,
 since $\lim_{(x,y)\rightarrow 0} H_2(x,y)/||(x,y)||^2 =0$, $\exists \dl_\eps >0$ such that
 \[ 
F(S_{2}(\dl_\eps)) \subset \{ u+iv \in \cplx : |(s^2+t^2-1)v- 2su|< \eps |v|\} .
\]
 Call the set in the right hand side as $\mathfrak{C}_{2, \eps}.$
 In fact shrinking $\dl_\eps$ further if necessary we shall also get
 \[F(S_{1}(\dl_\eps) \subset \mathfrak{C}_{1, \eps} := \{u+iv \in \cplx : |v |< \eps u \}. \]
 Now choose sufficiently small $\eps_0>0$ such that
 \[\mathfrak{C}_{1, \eps_0} \cap \mathfrak{C}_{2, \eps_0}\,=\, \{0\},\]
and write $\dl=\dl_{\eps_0}$.
\smallskip

Hence, in both the cases $F(S_{1}(\dl))$ and $F(S_{2}(\dl))$ lie in two different angular sectors
intersecting only at the origin. We also have 
\[ 
F^{-1}\{0\}\cap (S_{1}(\dl) \cup S_{2}(\dl))=\{(0,0)\}
\] 
is polynomially convex. So, all the hypotheses of Lemma~\ref{L:kallin} are satisfied.
Hence $S_{1}(\dl) \cup S_{2}(\dl)$ is polynomially convex.
\end{proof}

In view of our remarks above, Lemmas~\ref{L:realEig} and \ref{L:cplxEig} give us the result. \qed
\medskip 

\section{The proof of Theorem~\ref{T:epsperturb}}\label{S:proof-epsperturb}
Let $P$ be a $\cplx$-linear function such that
\[ \upl \subset \mathbb{H}:=\{(z,w)\in \CC :~ \imag{P(z,w)}=0 \} .\]
By interchanging the roles of $z$ and $w$ if necessary, we may assume that $\bdy_wP \not \equiv 0$.
Now consider the biholomorphic map
$ \Phi : \begin{pmatrix}
 z \\
 w
\end{pmatrix} \mapsto
\begin{pmatrix}
 z\\
 P(z,w)
 \end{pmatrix}$
 from $\CC$ to $\CC$.
 \smallskip

 We have \[\Phi(\upl) \subset \cplx_z \times \rea_u \qquad\qquad \left(\text{taking}~ w=u+iv \right). \]
 Since $span_\cplx \{M_1 \cap M_2\} \subset \mathbb{H}$ and $\mathbb{H}$ contains a unique complex line namely
 $\{(z,w)\in \CC :~ P(z,w)=0\}$, $ \Phi(span_\cplx \{M_1 \cap M_2\})= \cplx_z \times \{0\}$ and hence
 $ M_1 \cap M_2 \subset \cplx_z \times \{0\}.$
 Now we can find a $\tht \in [0,2\pi)$ such that if $\Psi:=  (e^{i\tht}\Phi_1,\Phi_2)$, then
 \begin{align}
 \Psi(M_1 \cap M_2) &= \{(x,0)\in \CC :~ x \in \rea \}, \;\; \text{and} \notag \\
 \Psi(\upl) &\subset \cplx_z \times \rea_u . \notag
 \end{align}
 \smallskip

Let us find the equations of $\Psi(M_j), ~j=1,2$. The analysis reduces to exactly two cases.
 \medskip

 \noindent{{\bf {\em Case I}.} {\em When neither $\Psi(M_1)$ nor $\Psi(M_2)$ is perpendicular to $\cplx_z \times \{0\} $}.
 \smallskip

 \noindent{In this case $\Psi(M_1)$ and $\Psi(M_2)$ both can be written in the graph form. Writing $z=x+iy$, 
  we get:}
 \smallskip

 Equation of $\Psi(M_j)=
 \begin{cases}
 A_j x+B_jy+D_ju &=0, \\
\qquad\qquad\qquad\quad v&=0, ~j=1,2.
 \end{cases}
 $
 \smallskip

 Both the planes $\Psi(M_j), ~j=1,2$, pass through $\{(x,0)\in \CC : x\in \rea \}$. Hence, 
 $A_j=0,~ j=1,2$, and hence, there exist $C_1, C_2 \in \rea \setminus \{0\}, ~ C_1 \neq C_2$ such that
 \begin{align}
 \Psi(M_1)&=\{(x+iy, C_1y)\in \CC :~ x+iy \in \cplx\}, \notag \\
 \Psi(M_2)&=\{(x+iy, C_2y)\in \CC :~ x+iy \in \cplx\} .\notag
 \end{align}
 Now, for $\eps >0$, write
 \[ F_j^\eps := \eps x^2+ \phi_j(z),\]
 where $\phi_j$ are real valued functions with $\phi_j(z)=o(|z|^2), $ and set
 \[ \wtil{S_j^\eps}:= \{(x+iy, C_jy+ F_j^\eps(x,y)) :~ x,y \in \rea\} ,~~j=1,2 .\]
 Consider the two parabolas in $\cplx_z$: $\mathfrak{Q}_j(\eps,\dl):=
 \{x+iy \in \cplx :~ (y-\dl/C_j)= -(\eps/C_j)x^2 \}$, $j=1,2$,
 and the following small perturbations of the above parabolas
 \[\wtil{\mathfrak{Q}}_j(\eps,\dl):= \{ x+iy \in \cplx :~ C_jy+ F_j^\eps(x,y)=\dl\},~j=1,2,\]
 where $\dl>0$ is sufficiently small.
 \smallskip

 It is an absolutely elementary fact that $\cplx_z \setminus (\mathfrak{Q}_1(\eps,\dl)\cup \mathfrak{Q}_2(\eps,\dl))$
 has a bounded component $\mathfrak{D}(\eps,\dl)$ and hence, for each $\eps>0$, there exists $\Delta_0(\eps)>0$ such that
 $\cplx_z \setminus (\wtil{\mathfrak{Q}}_1(\eps,\dl)\cup \wtil{\mathfrak{Q}}_2(\eps,\dl))$ has a bounded component
 $\wtil{\mathfrak{D}}(\eps,\dl)$ for all $\dl \in (0, \Delta_0(\eps)). $
 \smallskip

 Hence, $\mathfrak{A}_\dl:=\ba{\wtil{\mathfrak{D}}(\eps,\dl)} \times \{\dl\}$ are closed analytic discs with
 boundaries in $\wtil{S_1^\eps}\cup \wtil{S_2^\eps}$ for each $\dl \in (0, \Delta_0(\eps))$ and
 $\mathfrak{A}_\dl \rightarrow \{0\}$ as $\dl\searrow 0$.
 \smallskip

 Clearly $\wtil{S_1^\eps}\cup \wtil{S_2^\eps}$ is not polynomially convex at the origin.
 Hence, $S_j^\eps := \Psi^{-1}(\wtil{S_j^\eps}), ~j=1,2,$ are the required perturbations.
 \medskip

 \noindent{{\bf {\em Case II}.} {\em When one of $\Psi(M_j),~ j=1,2$, is perpendicular to $\cplx_z \times \{0\}$.}
 \smallskip

 \noindent{Let us assume that $\Psi(M_1)$ is perpendicular to $\cplx_z \times \{0\}$. So, $\Psi(M_2)$ can be 
written in graph form.}
 \smallskip

 We have,
 \begin{align}
 \Psi(M_1)&=\{(x,u)\in \CC :~  x,u \in \rea\} \notag \\
 \Psi(M_2)&=\{(x+iy, Cy) \in \CC :~x,y \in \rea \} \notag
 \end{align}
 where $C\in \rea \setminus \{0\}$. Choosing $F_2^\eps$ exactly same as in {\em Case I} we write:
 \begin{align}
 \wtil{S_1^\eps} &:= M_1 \notag \\
 \wtil{S_2^\eps}&:=\{(x+iy, Cy +F_2^\eps(x,y))\in \CC :~ x,y \in \rea \} . \notag
 \end{align}
 An analysis entirely similar to {\em Case I} will yield a $\Delta_0(\eps)>0$ and closed analytic discs $\mathfrak{A}_\dl$
 with boundaries in $\wtil{S_1^\eps}\cup\wtil{S_2^\eps}$ such that $\mathfrak{A}_\dl \rightarrow \{0\}$ as $\dl\searrow 0$.
 \smallskip

 As before, $S_j^\eps:= \Psi^{-1}(\wtil{S_j^\eps}), ~j=1,2,$ are the required perturbations. \qed
 \medskip

\section{The proof of Theorem~\ref{T:linePcvxFlat}}\label{S:proof-linePcvxFlat}
Since $dim_\rea (T_0S_1 \cap T_0S_2)=1$, $span_\rea(T_0S_1 \cup T_0S_2)$ is a real three dimensional subspace
of $\CC$ and there exists a $\cplx$-linear map $P: \CC \mapp \cplx$ such that
\[
 span_\rea(T_0S_1 \cup T_0S_2)= \left\lbrace(z,w)\in \CC~:~ \imag(P((z,w))=0\right\rbrace= :\mathbb{H}.
\]
The condition $S_1\cup S_2 \subset span_\rea(T_0S_1 \cup T_0S_2)$ implies $S_1\cup S_2 \subset \mathbb{H}$. 
Therefore we have:
\[
 (S_1(\dl)\cup S_2(\dl)\hull{){\;\;}} \subset cvx(S_1(\dl)\cup S_2(\dl)) \subset \mathbb{H},
\]
where $S_j(\dl)=S_j\cap \ba{B(0;\dl)},~~j=1,2$ (here, $B(0;\dl)$ denotes a ball in $\CC$ centred at origin
 and having radius $\dl$) and $cvx(S)$ denotes the convex hull of $S$. We consider the biholomorphism
 $ \Phi : \begin{pmatrix}
 z \\
 w
\end{pmatrix} \mapsto
\begin{pmatrix}
 z\\
 P(z,w)
 \end{pmatrix}$
(as before, we may assume, interchanging the roles of $z$ and $w$ if necessary, that $\bdy_wP \not \equiv 0$
 from $\CC$ to $\CC$ which has the following effect:
\begin{align}
\Phi(S_1\cap S_2)&\subset \cplx_z \times \rea_u, \;\;(\text{where} \; z=x+iy,\; w=u+iv) \notag\\
[\Phi(S_1(\dl)\cap S_2(\dl))\hull{]{\;\;}} &\subset \cplx_z \times \rea_u .\notag
\end{align}
\smallskip

Our examination involves exactly two cases. Let $\pi_z$ denote the projection onto the first coordinate.
\smallskip

\noindent {{\bf {\em Case I}.} {\em When $\pi_z[\Phi(T_0S_1\cap T_0S_2)]$ is a line in  $\cplx_z \times \{0\}$}.
\smallskip

\noindent We make one final adjustment. Let $\tht$ be the angle between the line $\{(x,0):\; x  \in \rea\}$  
and $\pi_z[\Phi(T_0S_1\cap T_0S_2)]$ in $\cplx_z \times \{0\}$, and let $\Psi:=(e^{-i\tht}\Phi_1, \Phi_2)$. Note that, from
the assumption $span_\cplx\{T_0S_1\cap T_0S_2\}\nsubseteq \ span_{\rea}\{\utgt{S_1}{S_2}\}$, we get 
$\Psi(T_0S_1 \cap T_0S_2)$ is not the $x$-axis. Hence $\exists a\in \rea \setminus \{0\}$ such that
\[
 \Psi(T_0S_1 \cap T_0S_2)\;\;: y=0,\; v=0,\; u=ax.
\]
Furthermore, we have:
\smallskip

Equation of $\Psi(T_0S_j)=
 \begin{cases}
 u&= ax+B_jy, \\
 v&=0,  \; B_j \in \rea, ~j=1,2, \; \text{and}\; B_1 \neq B_2.
 \end{cases} $
\smallskip

For sufficiently small $\dl>0$,
\begin{align}
 \wtil{S_j}(\dl):= \Psi(S_j)\cap \ba{B(0,\dl)}& = 
\begin{cases}
 u&= ax+B_jy+\varphi_j(x,y), \\
 v&=0,
\end{cases} \notag
\end{align}
where $\varphi_j(x,y)=O(|(x,y)|^2),\;j=1,2$.
We consider the polynomial $f(z,w)=w$. There is a small neighbourhood $\om(\dl)$ of $0 \in \cplx_z$ such that
\[
 f^{-1}\{t\}\cap (\wtil{S_1}(\dl) \cup \wtil{S_2}(\dl)) =\mathscr{K}^t_1\cup \mathscr{K}^t_2
\]
where 
\begin{align}
\mathscr{K}^t_1&:= \{(x+iy,t)\;:\; ax+B_1y+\varphi_1(x,y)=t, (x,y)\in \om(\dl)\}, \notag\\ 
\mathscr{K}^t_2&:= \{(x+iy,t)\;:\; ax+B_2y+\varphi_2(x,y)=t, (x,y)\in \om(\dl)\}.\notag
\end{align}
Note that if $\varphi_1=0$ and $\varphi_2 =0$, then the above union would have been a union of two line segments,
which is polynomially convex. Without loss of generality, we may take $B_1 \neq 0$. Then, 
$\pi_z(\mathscr{K}^t_1)$ is the graph of the function $\psi_1: \om(\dl)\cap \rea_x \mapp \rea$ with 
$\frac{d\psi_1}{dx}(0)=-a/B_1$. On the other hand:
\smallskip

$\pi_z(\mathscr{K}^t_2)= \begin{cases}
                &\text{the graph of a function}\; \psi_2: \om(\dl)\cap \rea_y \mapp \rea, \;\text{if}\; B_2=0,\\
                & \text{the graph of a function}\; \wtil{\psi_2}:\om(\dl)\cap \rea_x \mapp \rea \;\text{with}\;
                 \frac{d\wtil{\psi_2}}{dx}(0)=-a/B_2,\\
                & \qquad\qquad\qquad\qquad\qquad\qquad\qquad\qquad\qquad\qquad\qquad\;\text{if}\; B_2 \neq 0,
                \end{cases}$

\noindent for $\om(\dl)$ sufficiently small, and viewing $\cplx\cong \rea_x \times \rea_y$. Here is a brief 
justification of the above descriptions of $\pi_z(\mathscr{K}^t_j),~j=1,2$. Note that the equation 
$ax+\varphi_2(x,0)=t$ will have a unique solution in $\om(\dl)\cap \rea_x$, say $x=x_0(t)$, once we have chosen 
a $\dl>0$ sufficiently small and fixed it, for all $t \in \rea$ approaching to $0$. By the Implicit Function 
Theorem, $\psi_2$ is a function satisfying $\psi_2(0)=x_0(t)$ and 
$\frac{d\psi_2}{dy}(0)=-\bdy_y \varphi_2(x_0(t),0)/(a+\bdy_x \varphi_2(x_0(t),0))$. A similar,
but easier, argument gives the descriptions of $\psi_1$ and $\wtil{\psi_2}$. 
\smallskip

In either case, $\pi_z(\mathscr{K}^t_1)\cap \pi_z(\mathscr{K}^t_2)$ does not separate $\cplx_z \times \{0\}$,
whence $\mathscr{K}^t_1 \cup \mathscr{K}^t_2$ does not separate $\cplx_z \times \{t\}$, provided we choose and fix
 $\dl>0$ sufficiently small. In view of Result \ref{R:S1}, $\wtil{S_1}(\dl) \cup \wtil{S_2}(\dl)$ 
is polynomially convex. As $\Psi$ is a biholomorphism, we infer that $S_1\cup S_2$ is locally polynomially 
convex at $(0,0)\in \CC$.
\medskip

\noindent {{\bf {\em Case II}.} {\em When $\Phi(T_0S_1 \cap T_0S_2)= \{(0,u)\in \CC\;:\;u \in \rea\}$. }
\smallskip

\noindent Since $\Phi(T_0S_1\cap T_0S_2)= \{(0,u)\in \CC\;:\;u \in \rea\}$, both the planes $T_0S_1$ and $T_0S_2$
are perpendicular to $\cplx_z \times \{0\}$ in $\CC$. We can find an angle $\tht$ such that if we define
$\Psi(z,w):= (e^{i\tht}\Phi_1,\Phi_2 )$ then, neither  $\pi_z\circ\Psi(T_0S_1)$ nor $\pi_z \circ\Psi(T_0S_2)$ 
is the $x$-axis or the $y$-axis.
Hence we have:
\smallskip

Equation of $\Psi(T_0S_j)=
 \begin{cases}
 y&= A_jx, \\
 v&=0,\; A_j \in \rea \setminus \{0\},\;j=1,2, \; \text{and}\; A_1 \neq A_2. 
 \end{cases} $
\smallskip

For sufficiently small $\dl>0$,
\begin{align}
 \wtil{S_j}(\dl):= \Psi(S_j)\cap \ba{B(0,\dl)}& = 
\begin{cases}
 y&= A_jx+\varphi_j(x,u), \\
 v&=0,
\end{cases}  \notag
\end{align}
where $\varphi_j(x,u)=O(|(x,u)|^2),\;j=1,2$. As in the first case, we consider the polynomial $f(z,w)=w$.
There is a small neighbourhood $\om$ of $0 \in \cplx_z$ such that
\[
 f^{-1}\{t\}\cap (\wtil{S_1}(\dl) \cup \wtil{S_2}(\dl)) =\mathscr{K}^t_1\cup \mathscr{K}^t_2
\]
where 
\begin{align}
\mathscr{K}^t_1&:= \{(x+iy,t)\;:\; y= A_1x+\varphi_1(x,t), (x,y)\in \om(\dl)\} \notag\\ 
\mathscr{K}^t_2&:= \{(x+iy,t)\;:\; y=A_2x+\varphi_2(x,t), (x,y)\in \om(\dl)\}.\notag
\end{align}
Note here also that if $\varphi_1=0$ and $\varphi_2 =0$, then the above union would have been a union of two line segments,
which is polynomially convex. In this case $\pi_z(\mathscr{K}^t_j)$ is the graph of the function 
$\psi_j: \om(\dl)\cap \rea_x \mapp \rea$ with $\frac{d\psi_j}{dx}(0)=A_j$ for $j=1,2$. Hence, 
$\mathscr{K}^t_1 \cup \mathscr{K}^t_2$ does not separate $\cplx_z \times \{t\}$, provided we 
choose and fix $\dl>0$ sufficiently small. Hence, in view of Result 
\ref{R:S1}, $\wtil{S_1}(\dl) \cup \wtil{S_2}(\dl)$ is polynomially convex. As $\Psi$ is a biholomorphism,
 we infer that $S_1\cup S_2$ is locally polynomially convex at $(0,0)\in \CC$. \qed               
\medskip

\section{The Proof of Theorem~\ref{T:linePCVX}}\label{S:proof-linePCVX}
We begin by observing that since $T_0S_1\neq T_0S_2, \ \lambda \neq 0$. We shall use Kallin's lemma with
 the following polynomial
\[ P(z,w)= z+w+ \albar z^2+ \al w^2
,\]
where $\al \in \cplx$ will be chosen suitably. First, we examine the image of $S_1 \cap U$ under $P$. Let us designate
\[
\phi_1(z):= \ba{A} z^2+A\ba{z}^2+C_1z \ba{z}+O(|z|^3), \;\; z \in D(0;\dl).
\]
Thus we have,
\begin{align}
P(z, \ba{z}+\phi_1(z))&=z+\ba{z}+A\ba{z}^2+\ba{A}z^2+C_1|z|^2+\ba{\al}z^2+\al \ba{z}^2
+O(|z|^3), \notag \\
\imag{P(z,\ba{z}+\phi_1(z))}&=\imag{C_1}|z|^2+ O(|z|^3) \;\; \forall z\in D(0;\dl). \label{E:imM1}
\end{align}
Consequently, by using condition $(i)$, we can find a $\del{1} \in (0, \dl)$ sufficiently small so that
\begin{equation} \label{E:pinverselinePCVX}
 P^{-1}\{0\}\cap S_1(\del{1})=\{0\},
\end{equation}
where $S_1(\del{1})=S_1 \cap \ba{D(0;\del{1})}\times \cplx$.
\smallskip

Now let us look at the image of $S_2(\del{1})~(= S_2 \cap \ba{D(0;\del{1})}\times \cplx)$ under the polynomial $P$.
\begin{align}
&P(z,\ba{z}+\lm \ba{z}+\ba{\lm}z+\phi_2(z))\notag\\
&= z+\ba{z}+\lm \ba{z}+\ba{\lm}z+\phi_2(z)+\ba{\al}z^2+\al(\ba{z}+\lm \ba{z}+\ba{\lm}z)^2+O(|z|^3)\notag\\
&= z+\ba{z}+\lm \ba{z}+\ba{\lm}z+A_2 z^2+B_2 \ba{z}^2+ C_2|z|^2+\ba{\al}z^2+\al \ba{z}^2+
2\al \ba{z}(\lm \ba{z}+\ba{\lm}z)\notag\\
& \qquad\qquad\qquad\qquad\qquad +\al (\lm \ba{z}+\ba{\lm}z)^2+O(|z|^3) \notag\\
&=z+\ba{z}+\lm \ba{z}+\ba{\lm}z+A_2 z^2+(B_2+2\al \lm) \ba{z}^2+ (C_2+2\al\ba{\lm})|z|^2+\ba{\al}z^2+\al \ba{z}^2 \notag \\
&\qquad\qquad\qquad\qquad\qquad + \al (\lm \ba{z}+\ba{\lm}z)^2+O(|z|^3) \notag
\end{align}
We choose $\al$ such that
\[
\ba{A_2}=B_2+2 \al \lm.
\]
Note that
\[ C_2+ 2 \al \ba{\lm}= C_2+ \frac{(\ba{A_2}-B_2)\ba{\lm}^2}{|\lm|^2},
\]
and observe:
\begin{multline}
\imag{(P(z, \ba{z}+\ba{\lm}z+ \lm \ba{z}+ \phi_2(z)))}=\imag \left (\frac{(\overline{A_2}-B_2)\lbar^2}{|\lambda|^2}+ C_2\right)|z|^2 \\
+ \imag{\left(\frac{(\ba{A_2}-B_2)\ba{\lm}}{2|\lm|^2}\right)}(\ba{\lm}z+ \lm \ba{z})^2 +O(|z|^3) \;\;
\forall z \in D(0;\dl).\label{E:v1ptnontrans}
\end{multline}
We examine the second term on the right hand side of \eqref{E:v1ptnontrans}:
\begin{align}
\imag{\left(\frac{(\ba{A_2}-B_2) \ba{\lm}}{2|\lm|^2}\right)}\geq 0
 \impl \imag{\left(\frac{(\ba{A_2}-B_2)\ba{\lm}}{2|\lm|^2}\right)}(\ba{\lm}z+ \lm \ba{z})^2
&\leq 2 \imag((\ba{A_2}-B_2)\ba{\lm})|z|^2,
\label{E:imagalphag}
\end{align}
and
\begin{align}
\imag{\left(\frac{(\ba{A_2}-B_2)\ba{\lm}}{2|\lm|^2}\right)}< 0 \impl \imag{\left(\frac{(\ba{A_2}-B_2)\ba{\lm}}{2|\lm|^2}\right)}(\ba{\lm}z+ \lm \ba{z})^2\geq 2 \imag((\ba{A_2}-B_2)\ba{\lm})|z|^2.
\label{E:imagalphal}
\end{align}
We shall divide the remaining part of the proof into two cases.
\smallskip

\noindent{{\bf {\em Case I}.} {\em We consider the case when $\imag(C_1)<0$.}}

\noindent{So, ${\rm sgn}(\imag(C_1))=-1$ and hence by condition $(i)$
\[
\imag \left (\frac{(\overline{A_2}-B_2)\ba{\lm}^2}{|\lambda|^2}+ C_2\right) >0.
\]
If $\imag((\ba{A_2}-B_2)\ba{\lm}) \geq 0$ then by \eqref{E:imagalphag} there is a $\del{2} \in (0,\del{1})$ such that when
$z \in D(0;\del{2})$,
\begin{equation}\label{E:imM21}
 \imag(P(z, \ba{z}+\ba{\lm}z+ \lm \ba{z}+\phi_2(z))) \geq 0
\end{equation}
and equalling $0$ if and only if $z=0$. On the other hand, if $\imag((\ba{A_2}-B_2)\ba{\lm}) < 0 $, then
by \eqref{E:imagalphal}
\begin{align}
&\imag{\left(P(z,\ba{z}+\ba{\lm}z+\lm \ba{z}+\phi_2(z))\right)}\notag\\
& \geq \imag \left (\frac{(\overline{A_2}-B_2)\ba{\lm}^2}{|\lambda|^2}+ C_2\right)|z|^2
+ 2 \imag((\ba{A_2}-B_2)\ba{\lm})|z|^2+O(|z|^3)\notag\\
&=\left(\imag \left (\frac{(\overline{A_2}-B_2)\ba{\lm}^2}{|\lambda|^2}
+ C_2\right)+ 2 \imag((\ba{A_2}-B_2)\ba{\lm})\right)|z|^2+O(|z|^3).\notag
\end{align}
Hence by condition $(ii)$, and arguing exactly as above, we get that there is a 
$\del{2}\in (0,\del{1})$ such that when $z \in D(0;\del{2}) $,
\begin{equation}\label{E:imM22}
 \imag(P(z, \ba{z}+\ba{\lm}z+ \lm \ba{z}+\phi_2(z))) \geq 0
\end{equation}
and equalling $0$ if and only if $z=0$.}
\smallskip

Hence from \eqref{E:imM1},\eqref{E:imM21} and \eqref{E:imM22},
we have the following:
\smallskip

{\em There exists $\del{2}>0$ such that
\begin{itemize}
\item{$P^{-1}\{0\}\cap S_2(\del{2})=\{0\}$; and}
\item{$P(S_1(\del{2}))$ and $P(S_2(\del{2}))$ lie in the lower and upper half planes respectively and 
intersect only at the origin.}
\end{itemize}}
\medskip

\noindent{{\bf {\em Case II}.} {\em We consider the case when $\imag(C_1)>0$.}}

\noindent{Then by condition $(i)$,
\[
\imag \left (\frac{(\overline{A_2}-B_2)\ba{\lm}^2}{|\lambda|^2}+ C_2\right) <0.
\]
We argue similarly as in case \noindent{{\em Case I}. If $\imag((\ba{A_2}-B_2)\ba{\lm}) < 0$ then
there is a $\del{2} \in (0,\del{1})$ such that when $z \in D(0;\del{2})$,
\begin{equation}\label{E:imM23}
 \imag(P(z, \ba{z}+\ba{\lm}z+ \lm \ba{z}+\phi_2(z))) \leq 0
\end{equation}
and equalling $0$ if and only if $z=0$. On the other hand, if $\imag((\ba{A_2}-B_2)\ba{\lm}) \geq 0 $, then:
\begin{align}
&\imag{\left(P(z,\ba{z}+\ba{\lm}z+\lm \ba{z}+\phi_2(z))\right)}\notag\\
& \leq -\left|\imag \left(\frac{(\overline{A_2}-B_2)\ba{\lm}^2}{|\lambda|^2}+ C_2\right)\right| |z|^2+ 2 \imag((\ba{A_2}-B_2)\ba{\lm})|z|^2+O(|z|^3)\notag\\
&=\left(-\left|\imag \left(\frac{(\overline{A_2}-B_2)\ba{\lm}^2}{|\lambda|^2}+ C_2\right)\right|+ 2 \imag\left((\ba{A_2}-B_2)\ba{\lm}\right)\right)|z|^2+O(|z|^3).
\label{E:imntrans}
\end{align}
Hence by condition $(ii)$ and \eqref{E:imntrans}, there is a $\del{2}\in (0,\dl)$ such that when $z \in D(0;\del{2}) $,
\begin{equation}\label{E:imM24}
 \imag(P(z, \ba{z}+\ba{\lm}z+ \lm \ba{z}+\phi_2(z))) \leq 0
\end{equation}
and equalling $0$ if and only if $z=0$.}
\smallskip

In this case also from \eqref{E:imM1}, \eqref{E:imM23} and \eqref{E:imM24} we have the following:
\smallskip

{\em There exists $\del{2}>0$ such that
\begin{itemize}
\item{$P^{-1}\{0\}\cap S_2(\del{2})=\{0\}$; and}
\item{$P(S_1(\del{2}))$ and $P(S_2(\del{2}))$ lie in the upper and lower half planes respectively
and intersect only at the origin.}
\end{itemize}}
\medskip

 Therefore, with this choice of $P$, in all cases, the hypotheses of Kallin's lemma (Lemma \ref{L:kallin}) are met
  and hence $S_1(\del{2})\cup S_2(\del{2})$ is polynomially convex. Hence $S_1\cup S_2$ is locally polynomially 
convex at the origin. \qed
 \medskip

{\bf Acknowledgement.} I am grateful to Gautam Bharali for many useful discussions that we had during the
course of this work. I also wish to thank Nikolay Shcherbina for his helpful comments, especially concerning
an earlier version of Theorem \ref{T:epsperturb}.
\bigskip

\end{document}